\newtheorem{theorem}{Theorem}
\newtheorem{corollary}[theorem]{Corollary}
\newtheorem{definition}[theorem]{Definition}
\newtheorem{lemma}[theorem]{Lemma}
\newtheorem{proposition}[theorem]{Proposition}
\def\J#1#2#3{ \left\{ #1,#2,#3 \right\} }
\def\NN{{\mathbb{N}}}
\def\11{\textbf{$1$}}
\def\CC{{\mathbb{C}}}
\begin{document}
\title[O.P. \& O.A. holomorphic mappings]{Orthogonally additive, orthogonality preserving, holomorphic mappings between C$^*$-algebras}

\author[J.J. Garc{\' e}s]{Jorge J. Garc{\' e}s}
\email{jgarces@ugr.es}
\address{Departamento de An{\'a}lisis Matem{\'a}tico, Facultad de
Ciencias, Universidad de Granada, 18071 Granada, Spain.}

\author{Antonio M. Peralta}

\email{aperalta@ugr.es}
\address{Departamento de An{\'a}lisis Matem{\'a}tico, Facultad de
Ciencias, Universidad de Granada, 18071 Granada, Spain.}

\author{Daniele Puglisi}

\email{dpuglisi@dmi.unict.it}

\address{Department of Mathematics and Computer Sciences, University of
Catania, Catania, 95125, Italy}

\thanks{Authors partially partially supported by the Spanish Ministry of Economy and Competitiveness,
D.G.I. project no. MTM2011-23843, and Junta de Andaluc\'{\i}a grant FQM3737.}

\author[Ram\'{i}rez]{Mar{\'\i}a Isabel Ram{\'\i}rez}
\address{Departamento de Algebra y An\'alisis Matem\'atico, Universidad de
Almer\'ia, 04120 Almer\'ia, Spain} \email{mramirez@ual.es}

\date{}
\maketitle

\begin{abstract} We study holomorphic maps between C$^*$-algebras $A$ and $B$. When $f:B_A (0,\varrho) \longrightarrow B$ is a holomorphic mapping whose Taylor series at zero is uniformly converging in some open unit ball $U=B_{A}(0,\delta)$ and we assume that $f$ is orthogonality preserving on $A_{sa}\cap U$, orthogonally additive on $U$ and $f(U)$ contains an invertible element in $B$, then there exist a sequence $(h_n)$ in $B^{**}$ and Jordan $^*$-homomorphisms $\Theta, \widetilde{\Theta} : M(A) \to B^{**}$ such that $$ f(x) = \sum_{n=1}^\infty h_n \widetilde{\Theta} (a^n)= \sum_{n=1}^\infty {\Theta} (a^n) h_n,$$ uniformly in $a\in U$. When $B$ is abelian the hypothesis of $B$ being unital and $f(U)\cap \hbox{inv} (B) \neq \emptyset$ can be relaxed to get the same statement.
\end{abstract}

\medskip
\noindent
{\bf 2010 MSC: }Primary 46G20, 46L05; Secondary 46L51, 46E15, 46E50.\\
{\bf Keywords and phrases:} C$^*$-algebra, von Neumann algebra,
orthogonally\hyphenation{Ortho-gonally} additive holomorphic
functions, orthogonality preservers, orthomorphism, non-commutative $L_1$-spaces.

\maketitle

\section{Introduction}

The description of orthogonally additive $n$-homogeneous polynomial on $C(K)$-spaces and on general C$^*$-algebras, developed by Y. Benyamini, S. Lassalle, J.L.G. Llavona \cite{BLL} and D. P{\'e}rez, and I. Villanueva \cite{PerezVi} and C. Palazuelos, A.M. Peralta and I. Villanueva \cite{PPV}, respectively (see also \cite{CLZ06} and \cite[\S 3]{BurFerGarPe}), led Functional Analysts to study and explore orthogonally additive holomorphic functions on $C(K)$-spaces (see \cite{CLZ,JarPriZal}) and subsequently on general C$^*$-algebras (cf. \cite{PePugl}).\smallskip

We recall that a mapping $f$ from a C$^*$-algebra $A$ into a Banach space $B$ is said to be \emph{orthogonally additive} on a subset $U\subseteq A$ if for every $a,b$ in $U$ with $a \perp
b$, and $a+b\in U$ we have $f(a+b) = f(a) +f(b)$, where elements $a,$ $b$ in $A$ are said to be
\emph{orthogonal} (denoted by $a \perp b$) whenever $a b^* = b^* a=0$. We shall say that $f$ is \emph{additive on elements having zero-product} if for every $a,b$ in $A$ with $a b = 0 $ we have $f(a+b) = f(a) +f(b)$. 
Having this terminology in mind, the description of all $n$-homogeneous polynomials on a general C$^*$-algebra, $A,$ which are orthogonally additive on the self adjoint part, $A_{sa}$, of $A$ reads as follows (see section \S 2 for concrete definitions not explained here).

\begin{theorem}\label{thm PaPeVill}\cite{PPV}
Let $A$ be a C$^*$-algebra, $B$ a Banach space, $n\in \mathbb N,$ and let
$P:A \to B$ be an $n$-homogeneous polynomial. The following statements are
equivalent:
\begin{enumerate}[$(a)$]
\item There exists a bounded linear operator $T:A\to X$ satisfying
$$P(a)=T (a^n),$$ for every $a\in A,$ and $\|P\| \leq \|T\| \leq 2 \|P\|$.
\item $P$ is additive on elements having zero-products. \item $P$
is orthogonally additive on $A_{sa}$.$\hfill\Box$
\end{enumerate}
\end{theorem}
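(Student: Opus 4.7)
My plan is to prove the easy implications $(a)\Rightarrow(c)$ and $(b)\Rightarrow(c)$ first, and to concentrate the work on $(c)\Rightarrow(a)$. The implication $(a)\Rightarrow(c)$ is immediate: when $a,b\in A_{sa}$ are orthogonal they commute with $ab=ba=0$, so the binomial expansion collapses to $(a+b)^n=a^n+b^n$, and linearity of $T$ gives $P(a+b)=P(a)+P(b)$. The step $(b)\Rightarrow(c)$ is equally short since self-adjoint orthogonal elements satisfy $ab=0$, so (b) applies directly. The reverse direction $(c)\Rightarrow(b)$ will follow once we have the representation in $(a)$, via a polarization/decomposition argument on the real and imaginary parts of general elements with $ab=0$.

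The heart of the argument is $(c)\Rightarrow(a)$. For each $a\in A^+$, the C$^*$-subalgebra $C^*(a)$ is commutative and isomorphic to $C_0(\sigma(a)\setminus\{0\})$. Restricting $P$ to $C^*(a)$ gives an orthogonally additive $n$-homogeneous polynomial on a commutative $C_0$-space, so by Benyamini--Lassalle--Llavona \cite{BLL} there is a bounded linear $T_a:C^*(a)\to B$ with $P(b)=T_a(b^n)$ for all $b\in C^*(a)$. The first task is to check that the $T_a$ are coherent: if $b\in C^*(a)\cap C^*(a')$, both $T_a(b^n)$ and $T_{a'}(b^n)$ equal $P(b^{1/n})$ for $b\in A^+$, so the assignment $b\mapsto P(b^{1/n})$ defines an unambiguous map $T:A^+\to B$.

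Next I would upgrade $T$ from being additive on \emph{orthogonal} sums (which is what $(c)$ directly provides) to being $\RR$-linear on all of $A^+$. The idea is to approximate an arbitrary positive element by finite linear combinations of mutually orthogonal projections via Borel functional calculus inside $A^{**}$; orthogonal additivity together with continuity (deduced from the $n$-homogeneous polynomial $P$ being bounded) then yields additivity of $T$ on arbitrary positive sums. Once $T$ is $\RR$-linear on $A^+$, I extend it $\RR$-linearly to $A_{sa}=A^+-A^+$ via the positive/negative-part decomposition and then $\CC$-linearly to $A=A_{sa}+iA_{sa}$. The standard four-term decomposition $a=\mathrm{Re}(a)_+-\mathrm{Re}(a)_-+i\,\mathrm{Im}(a)_+-i\,\mathrm{Im}(a)_-$, together with $\|\mathrm{Re}(a)_\pm\|,\|\mathrm{Im}(a)_\pm\|\leq\|a\|$, supplies the bound $\|T\|\leq 2\|P\|$, while $\|P\|\leq\|T\|$ is immediate from $P(a)=T(a^n)$. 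A final polarization check extends the identity $P(a)=T(a^n)$ from self-adjoint $a$ to all of $A$: both sides are $n$-homogeneous polynomials whose associated symmetric $n$-linear forms agree on tuples in $A_{sa}$, and hence on all of $A^n$ by multilinearity.

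The principal obstacle is the gluing step: passing from the family of locally defined $T_a$ on each commutative $C^*(a)$ to a single bounded linear $T$ on all of $A$. In the abelian setting of \cite{BLL}, Riesz representation produces $T$ in one stroke from a single ambient measure, but in a noncommutative C$^*$-algebra there is no such ambient object, and the coherent extension across different maximal abelian subalgebras must be carried out inside $A^{**}$, using its abundant projection lattice and Borel functional calculus to manufacture the required orthogonal approximations. It is precisely in this noncommutative gluing that the factor of $2$ in the norm estimate $\|T\|\leq 2\|P\|$ appears to be unavoidable.
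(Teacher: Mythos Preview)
The paper does not supply its own proof of this theorem: it is quoted verbatim from \cite{PPV} and closed with a $\Box$, so there is nothing in the present paper to compare your argument against. What follows is therefore an assessment of your sketch on its own terms and a brief comparison with the approach actually used in \cite{PPV}.

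Your implications $(a)\Rightarrow(c)$ and $(b)\Rightarrow(c)$ are fine. The substantive gap is in your route to $(c)\Rightarrow(a)$. Defining $T(b):=P(b^{1/n})$ on $A^{+}$ is natural, and the restriction to each $C^*(a)$ is handled by \cite{BLL}. But the passage from ``$T$ is additive on orthogonal positive pairs'' to ``$T$ is additive on all of $A^{+}$'' is exactly the hard noncommutative step, and the mechanism you propose does not close it. Approximating a positive element by finite orthogonal combinations of spectral projections in $A^{**}$ requires $T$ (or $P$) to be defined and suitably continuous on $(A^{**})^{+}$; the Aron--Berner extension of $P$ exists, but you have not argued that it remains orthogonally additive on $(A^{**})_{sa}$, nor that your candidate $T$ extends weak$^*$-continuously. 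More seriously, for noncommuting $a,b\in A^{+}$ the spectral resolution of $a+b$ bears no simple relation to those of $a$ and $b$ separately, so an orthogonal approximation of $a+b$ does not decompose into orthogonal approximations of $a$ and of $b$. Without that, orthogonal additivity alone cannot be leveraged to produce $T(a+b)=T(a)+T(b)$.

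Your remark that $(a)\Rightarrow(b)$ ``will follow via polarization on real and imaginary parts'' is also not justified: if only $ab=0$ (and not $ba=0$), then $(a+b)^{n}-a^{n}-b^{n}$ contains nonzero cross terms of the form $b^{j}a^{n-j}$, and a generic linear $T$ with $P=T(\,\cdot\,)^{n}$ need not annihilate them. Any correct argument for this implication must use the \emph{specific} $T$ produced in the proof of $(c)\Rightarrow(a)$, not an arbitrary one.

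For comparison, the proof in \cite{PPV} does not attempt to glue commutative pieces. It works instead with the symmetric $n$-linear form $\widehat{P}$ associated to $P$: orthogonal additivity on $A_{sa}$ forces $\widehat{P}$ to vanish whenever two of its self-adjoint arguments are orthogonal, and a multilinear version of Goldstein's theorem (cf.\ \cite{Gold}, used in the same spirit in the present paper's Proposition~\ref{p symmetric OP pairs}) then shows that $\widehat{P}(a_{1},\ldots,a_{n})$ depends only on the symmetrized product, yielding the linear $T$ directly together with the norm bound. That route sidesteps entirely the noncommutative gluing obstacle in your sketch.
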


The task of replacing $n$-homogeneous polynomials by polynomials or by holomorphic functions involves a higher difficulty. For example, as noticed by D. Carando, S. Lassalle and I. Zalduendo \cite[Example 2.2.]{CLZ}, when $K$ denotes the closed unit disc in $\mathbb{C}$, there is no entire function $\Phi : \CC \to \CC$ such that the mapping $h: C(K) \to C(K)$, $h(f) = \Phi \circ f$ factors all degree-2 orthogonally additive scalar polynomials over $C(K)$. Furthermore, similar arguments show that, defining $P:C([0,1])\to \mathbb{C}$, $P(f) = f(0) + f(1)^2$, we cannot find a triplet $(\Phi,\alpha_1,\alpha_2)$, where  $\Phi: C[0,1] \to \mathbb{C}$ is a $^*$-homomorphism and $\alpha_1,\alpha_2 \in \mathbb{C}$, satisfying that $P(f) = \alpha_1 \Phi (f) + \alpha_2 \Phi (f^2)$ for every $f\in C([0,1])$.\smallskip

To avoid the difficulties commented above, Carando, Lassalle and Zalduendo introduce a factorization through an $L_1 (\mu)$ space. More concretely, for each compact Hausdorff space $K$, a holomorphic mapping of bounded type $f : C(K) \to \mathbb{C}$ is orthogonally additive if and only if there exist a Borel regular measure $\mu$ on $K$, a sequence $(g_k)_k \subseteq L_1(\mu)$ and a holomorphic function of bounded type $h: C(K) \to L_1(\mu)$ such that $\displaystyle h(a) = \sum_{k=0}^\infty g_k~ a^k,$ and
$$f(a) = \int_K h(a)~d\mu,$$  for every  $a\in C(K)$ (cf. \cite[Theorem 3.3]{CLZ}).\smallskip

When $C(K)$ is replaced with a general C$^*$-algebra $A$, a holomorphic function of bounded type $f: A\to \mathbb{C}$ is orthogonally additive on
$A_{sa}$ if and only if there exist a positive functional $\varphi$ in $A^*$, a sequence $(\psi_n)$ in $L_1
(A^{**},\varphi)$ and a power series holomorphic function $h$ in $\mathcal{H}_b(A, A^*)$ such that
$$h(a) = \sum_{k=1}^{\infty} \psi_k \cdot a^k \hbox{ and }
f(a) = \langle 1_{_{A^{**}}},  h(a) \rangle = \int h(a) \
d\varphi,$$ for every $a$ in $A$, where $1_{_{A^{**}}}$ denotes
the unit element in $A^{**}$ and $L_1 (A^{**},\varphi)$ is a
non-commutative $L_1$-space (cf. \cite{PePugl}).\smallskip

A very recent contribution due to Q. Bu, M.-H. Hsu, and N.-Ch. Wong \cite{BuHsuWong2013}, shows that, for holomorphic mappings between $C(K)$, we can avoid the factorization through an $L_1 (\mu)$-space by imposing additional hypothesis. Before stating the detailed result, we shall set down some definitions.\smallskip

Let $A$ and $B$ be C$^*$-algebras. When $f:U\subseteq A\to B$ is a map and the condition \begin{equation}
 \label{eq OP maps} a\perp b \Rightarrow f(a)\perp f(b)
 \end{equation}(respectively, \begin{equation}\label{eq zero product preserver} a b =0 \Rightarrow f(a) f(b) =0\ )
 \end{equation} holds for every $a,b\in U$, we shall say that $f$ \emph{preserves orthogonality} or is \emph{orthogonality preserving} (respectively, $f$ \emph{preserves zero products}) on $U$.
In the case $A=U$ we shall simply say that $f$ is \emph{orthogonality preserving}  (respectively, $f$ \emph{preserves zero products}). Orthogonality preserving bounded linear maps between C$^*$-algebras were completely described in \cite[Theorem 17]{BurFerGarMarPe} (see \cite{BurFerGarPe} for completeness).\smallskip

The following Banach-Stone type theorem for zero product preserving or orthogonality preserving holomorphic functions between $C_0(L)$ spaces is established by Bu, Hsu and Wong in \cite[Theorem 3.4]{BuHsuWong2013}.

\begin{theorem}\label{t BuShuWong}\cite{BuHsuWong2013} Let $L_1$ and $L_2$ be locally compact Hausdorff spaces and let $H: B_{C_0(L_1)} (0, r) \to  C_0(L_2 )$ be a bounded orthogonally additive holomorphic function. If $H$ is zero product preserving or orthogonality preserving, then there exist a sequence $(\mathcal{O}_n)$ of open subsets of $L_2$, a sequence $(h_n)$ of bounded functions from $L_2\cup \{\infty\}$ into $\mathbb{C}$ and a mapping $\varphi : L_2 \to L_1$ such that for each natural $n$ the function $h_n$ is continuous and nonvanishing on $\mathcal{O}_n$ and $$ f(a) (t) = \sum_{n=1}^\infty h_n (t) \left(a(\varphi(t))\right)^n, (t\in L_2),$$ uniformly in $a\in B_{C_0(L_1)}(0,r)$.$\hfill\Box$\end{theorem}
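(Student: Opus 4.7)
The plan is to reduce to the homogeneous-polynomial case via Taylor expansion at $0$, apply Theorem~\ref{thm PaPeVill} to each term, and then stitch the resulting linear representations into a single weighted composition formula using the orthogonality (or zero-product) preservation of $H$. First I would observe that $H(0)=0$ by orthogonal additivity applied to the pair $0\perp 0$, so one may expand
\[
H(a)=\sum_{n=1}^{\infty}P_{n}(a), \qquad a\in B_{C_{0}(L_{1})}(0,r),
\]
with each $P_{n}$ a continuous $n$-homogeneous polynomial. Comparing coefficients of $\lambda$ in the identity $H(\lambda(a+b))=H(\lambda a)+H(\lambda b)$ for $a\perp b$ shows that every $P_{n}$ inherits orthogonal additivity on $C_{0}(L_{1})_{sa}$, so Theorem~\ref{thm PaPeVill} produces bounded linear operators $T_{n}:C_{0}(L_{1})\to C_{0}(L_{2})$ with $P_{n}(a)=T_{n}(a^{n})$.

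Next I would propagate the hypothesis on $H$ down to each $T_{n}$. In the zero-product case, for any $a,b$ with $ab=0$ and small scalars $\lambda,\mu$ one has $H(\lambda a)H(\mu b)=0$; expanding in a double power series and identifying the $\lambda^{n}\mu^{m}$ coefficient yields
\[
T_{n}(a^{n})\,T_{m}(b^{m})=0 \qquad \text{for all } n,m\ge 1.
\]
Specializing $m=n$ and decomposing an arbitrary zero-product pair in the commutative algebra $C_{0}(L_{1})$ into its positive real/imaginary components (each admitting an $n$-th root with the same cozero set) yields that $T_{n}$ preserves zero products on all of $C_{0}(L_{1})$; the orthogonality-preserving case is analogous upon also tracking the conjugate series. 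The standard description of zero-product preserving bounded linear maps between $C_{0}$-spaces then provides a weighted composition representation $T_{n}(g)(t)=h_{n}(t)\,g(\varphi_{n}(t))$, with $h_{n}$ bounded, continuous and nonvanishing on the open set $\mathcal{O}_{n}:=\{t\in L_{2}:h_{n}(t)\neq 0\}$, and $\varphi_{n}:\mathcal{O}_{n}\to L_{1}$ continuous.

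Finally I would glue the $\varphi_{n}$ together. For $t\in\mathcal{O}_{n}\cap\mathcal{O}_{m}$, suppose for contradiction that $\varphi_{n}(t)\neq\varphi_{m}(t)$; Urysohn's lemma furnishes disjointly-supported $a,b\in C_{0}(L_{1})$ with $a(\varphi_{n}(t))\neq 0$ and $b(\varphi_{m}(t))\neq 0$, contradicting the identity $T_{n}(a^{n})(t)\,T_{m}(b^{m})(t)=0$ from the previous step. Hence the $\varphi_{n}$ agree on overlaps and define a common map $\varphi:L_{2}\to L_{1}$ by setting $\varphi(t):=\varphi_{n}(t)$ on $\mathcal{O}_{n}$ (and arbitrarily elsewhere), so that
\[
H(a)(t)=\sum_{n=1}^{\infty}h_{n}(t)\,a(\varphi(t))^{n}
\]
uniformly in $a\in B_{C_{0}(L_{1})}(0,r)$. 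The main obstacle in this program is precisely the coherence step: the triples $(\mathcal{O}_{n},h_{n},\varphi_{n})$ are produced independently for each degree, and only the global hypothesis on $H$, channelled through the bilinear coefficient identities, forces them to share a single base map $\varphi$. A secondary technical point is extracting zero-product preservation of $T_{n}$ from a condition that a priori lives only on $n$-th powers, which the positive-decomposition trick in the commutative setting handles cleanly.
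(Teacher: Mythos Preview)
Your argument is essentially correct and follows what is presumably the original line of proof in \cite{BuHsuWong2013}. Note, however, that the present paper does not give its own detailed proof of this statement: the theorem is quoted from \cite{BuHsuWong2013} (hence the $\Box$), and the only indication of how the authors would derive it appears after Theorem~\ref{thm OP + OA Holom commutative}, where they remark that Theorem~\ref{t BuShuWong} is a direct consequence of that result. So the paper's route is: invoke Corollary~\ref{c OP + OA Holom} to obtain the sequence $(T_n)$ with pairwise orthogonality-preserving property, then apply the machinery of Section~3 (Propositions~\ref{p symmetric OP pairs} and~\ref{c pairs OP and OP operators}, Proposition~\ref{p 1 comm}) to produce a single Jordan $^*$-homomorphism $\Phi:M(A)\to B^{**}$ with $T_n(a)=h_n\Phi(a)$; specializing to $A=C_0(L_1)$ and $B=C_0(L_2)$ then yields the composition map $\varphi$ via Gelfand duality.

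Your approach is more elementary and stays entirely within the commutative world: you go straight from the pairwise identity $T_n(a^n)T_m(b^m)=0$ to the weighted-composition form of each $T_n$ and then glue the base maps pointwise via Urysohn. This avoids the general C$^*$-algebra apparatus (Goldstein's theorem, range partial isometries, the JB$^*$-algebra structure on Peirce subspaces) that the paper develops for the non-commutative case. What you lose is generality; what you gain is a self-contained argument that makes the coherence of the $\varphi_n$ completely transparent. One small point worth tightening: when you pass from $T_n(a^n)T_m(b^m)=0$ for $ab=0$ to the statement that $T_n$ itself preserves zero products, you should note explicitly that in $C_0(L_1)$ the condition $cd=0$ is equivalent to disjointness of cozero sets, so that all four real/imaginary positive/negative parts of $c$ are automatically disjoint from those of $d$, and each admits an $n$-th root with the same cozero set; this is the commutative-specific step that has no direct analogue in the paper's general treatment.
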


The study developed by Bu, Hsu and Wong restricts to commutative C$^*$-algebras or to orthogonality preserving and orthogonally additive, $n$-homogeneous polynomials between general C$^*$-algebras. The aim of this paper is to extend their study to holomorphic maps between general C$^*$-algebras. In Section 4, we determine the form of every orthogonality preserving, orthogonally additive holomorphic function from a general C$^*$-algebra into a commutative C$^*$-algebra (see Theorem \ref{thm OP + OA Holom commutative}).\smallskip

In the wider setting of holomorphic mappings between general C$^*$-algebras, we prove the following: Let $A$ and $B$ be C$^*$-algebras with $B$ unital and let $f:B_A (0,\varrho) \longrightarrow B$ be a holomorphic mapping whose Taylor series at zero is uniformly converging in some open unit ball $U=B_{A}(0,\delta)$. Suppose $f$ is orthogonality preserving on $A_{sa}\cap U$, orthogonally additive on $U$ and $f(U)$ contains an invertible element. Then there exist a sequence $(h_n)$ in $B^{**}$ and Jordan $^*$-homomorphisms $\Theta, \widetilde{\Theta} : M(A) \to B^{**}$ such that $$ f(x) = \sum_{n=1}^\infty h_n \widetilde{\Theta} (a^n)= \sum_{n=1}^\infty {\Theta} (a^n) h_n,$$ uniformly in $a\in U$ (see Theorem \ref{thm OP + OA Holom general}).\smallskip

The main tool to establish our main results is a newfangled investigation on orthogonality preserving pairs of operators between C$^*$-algebras developed in Section 3. Among the novelties presented in Section 3, we find an innovating alternative characterization of orthogonality preserving operators between C$^*$-algebras which complements the original one established in \cite{BurFerGarMarPe} (see Proposition \ref{c pairs OP and OP operators}). Orthogonality preserving pairs of operators are also valid to determine orthogonality preserving operators and orthomorphisms or local operators on C$^*$-algebras in the sense employed by A.C. Zaanen \cite{Za} and B.E. Johnson \cite{John01}, respectively.\smallskip

\section{Orthogonally additive, orthogonality preserving, holomorphic mappings on C$^*$-algebras}

Let $X$ and $Y$ be Banach spaces. Given a natural $n$, a (continuous) $n$-homogeneous polynomial $P$
from $X$ to $Y$ is a mapping $P: X \longrightarrow Y$ for which
there is a (continuous) multilinear symmetric operator
$A: X\times \ldots \times X \to Y$ such that
$P(x) = A(x , \ldots , x), \ \text{for every}\  x \in X.$
All the polynomials considered in this paper are assumed to be continuous. By a $0$-homogeneous polynomial we mean a constant function. The symbol $\mathcal{P}(^n X , Y)$ will denote the Banach space of all
continuous $n$-homogeneous polynomials from $X$ to $Y$, with norm given by
$\displaystyle \|P\| = \sup_{\|x\|\leq 1} \|P(x)\|.$
\smallskip

Throughout the paper, the word operator will always stand for a bounded linear mapping.\smallskip

We recall that, given a domain $U$ in a complex Banach space $X$ (i.e. an open, connected subset), a function $f$ from $U$ to another complex Banach space $Y$ is said to be \emph{holomorphic} if the Fréchet derivative of $f$ at $z_0$ exists for every point $z_0$ in $U$. It is known that $f$ is holomorphic in $U$ if and only if for each $z_0 \in X$ there exists a sequence $\left(P_k(z_0)\right)_k$ of polynomials from $X$ into $Y$, where each $P_k (z_0)$ is $k$-homogeneous, and a neighborhood $V_{z_0}$ of $z_0$ such that the series $$ \sum_{k=0}^\infty P_k(z_0)  (y - z_0) $$
converges uniformly to $f(y)$ for every $y \in V_{z_0}$. Homogeneous polynomials on a C$^*$-algebra $A$ constitute the most basic examples of holomorphic functions on $A$. A holomorphic function $f :X\longrightarrow Y$ is said to be of {\em bounded type} if it is bounded on all bounded subsets of $X$,
in this case its Taylor series at zero, $f =
\sum_{k=0}^\infty P_k,$ has infinite radius of uniform convergence,
i.e.  $\limsup_{k \rightarrow \infty}
\|P_k\|^{\frac{1}{k}} = 0$ (compare \cite[\S
6.2]{Dineen}, see also \cite{Gam}).\smallskip

Suppose $f: B_X(0,\delta) \to Y$ is a holomorphic function and let $\displaystyle f=
\sum_{k=0}^\infty P_k$ be its Taylor series at zero which is assumed to be uniformly convergent in $U=B_X(0,\delta)$. Given $\varphi \in Y^*$, it follows from Cauchy's integral formula that, for each $a\in U$, we have: $$\varphi P_n (a) = \frac{1}{2 \pi i} \int_{\gamma} \frac{\varphi f (\lambda a)}{\lambda^{n+1}} d\lambda,$$ where $\gamma$ is the circle forming the boundary of a disc in the complex plane $D_{\mathbb{C}} (0,r_1),$ taken counter-clockwise, such that $a+ D_{\mathbb{C}} (0,r_1) a \subseteq U$. We refer to \cite{Dineen} for the basic facts and definitions used in this paper.\smallskip

In this section we shall study orthogonally
additive, orthogonality preserving, holomorphic mappings between C$^*$-algebras. We begin with an observation which can be directly derived from Cauchy's integral formula. The statement in the next lemma was originally stated by D. Carando, S. Lassalle and I. Zalduendo in \cite[Lemma 1.1]{CLZ} (see also \cite[Lemma 3]{PePugl}).

\begin{lemma}\label{l 1.1 in CarLassZal}
Let $f:B_A (0,\varrho) \longrightarrow B$ be a holomorphic mapping, where $A$ is a C$^*$-algebra and $B$ is a complex Banach space, and let $\displaystyle f=
\sum_{k=0}^\infty P_k$ be its Taylor series at zero, which is uniformly converging in $U=B_A (0,\delta)$. Then the mapping $f$ is orthogonally additive on $U$ {\rm(}respectively, orthogonally additive on
$A_{sa}\cap U$ or additive on elements having zero-product in $U${\rm)} if, and only
if, all the $P_k$'s satisfy the same property. In such a case,
$P_0 =0$.$\hfill\Box$
\end{lemma}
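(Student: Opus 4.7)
The \emph{if} direction is immediate in all three cases: if each $P_k$ is orthogonally additive (respectively on the self-adjoint part, or on zero-product pairs), then so is $f=\sum_k P_k$, since the property passes to pointwise sums and the Taylor series converges uniformly on $U$.

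For the \emph{only if} direction, fix $a,b\in U$ satisfying the relevant orthogonality or zero-product relation and with $a+b\in U$. Since $U$ is open and contains the three points $a$, $b$, $a+b$, one can choose $r_1>0$ so small that $\lambda a$, $\lambda b$ and $\lambda(a+b)$ all lie in $U$ whenever $|\lambda|\leq r_1$. Consider the $B$-valued holomorphic function
$$ g(\lambda) := f(\lambda(a+b)) - f(\lambda a) - f(\lambda b), \qquad |\lambda| < r_1. $$
In the case of orthogonal additivity on all of $U$ and in the case of zero-product additivity, the defining relation is preserved under complex scaling, via the identities $(\lambda a)(\lambda b)^{*}=|\lambda|^{2}\,ab^{*}$ and $(\lambda a)(\lambda b)=\lambda^{2}\,ab$, so the hypothesis on $f$ forces $g\equiv 0$ throughout the disc $|\lambda|<r_1$. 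Applying the Cauchy integral representation recalled just before the statement,
$$ P_n(x) = \frac{1}{2\pi i}\int_{|\lambda|=r_1}\frac{f(\lambda x)}{\lambda^{n+1}}\,d\lambda, $$
with $x=a+b$, $a$, $b$ in turn and subtracting, one obtains $P_n(a+b)=P_n(a)+P_n(b)$ for every $n\geq 0$.

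The only delicate case is orthogonal additivity on $A_{sa}\cap U$: if $a,b\in A_{sa}$ and $\lambda\in \mathbb{C}\setminus\mathbb{R}$, then $\lambda a$ is no longer self-adjoint, so the hypothesis on $f$ yields $g(\lambda)=0$ only for $\lambda\in(-r_1,r_1)$. However, $g$ is holomorphic on the full complex disc $|\lambda|<r_1$, so the identity principle upgrades this to $g\equiv 0$ on that disc, and the Cauchy calculation proceeds as above. Finally, $P_0=f(0)$ vanishes because $0\perp 0$ (and $0\cdot 0=0$), so in all three situations applying the additivity hypothesis to $a=b=0$ gives $f(0)=2f(0)$, i.e. $P_0=0$. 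The main (and really only) obstacle is the complex-scaling issue in the self-adjoint case, resolved cleanly by the identity principle for $B$-valued holomorphic functions.
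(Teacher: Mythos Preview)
Your proof is correct and follows exactly the route the paper indicates: the authors do not write out a proof but state that the lemma ``can be directly derived from Cauchy's integral formula'' and refer to \cite[Lemma~1.1]{CLZ} and \cite[Lemma~3]{PePugl}, which is precisely the argument you give. Your explicit handling of the self-adjoint case via the identity principle for $B$-valued holomorphic functions is the natural way to fill in the one nontrivial detail.
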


We recall that a functional $\varphi$ in the dual of a C$^*$-algebra $A$ is \emph{symmetric} when $\varphi (a)\in \mathbb{R}$, for every $a\in A_{sa}$. Reciprocally, if $\varphi (b) \in \mathbb{R}$ for every symmetric functional $\varphi \in A^*$, the element $b$ lies in $A_{sa}$. Having this in mind, our next lemma also is a direct consequence of the Cauchy's integral formula. A mapping $f: A\to B$ between C$^*$-algebras is called \emph{symmetric} whenever $f(A_{sa})\subseteq B_{sa}$, or equivalently, $f(a) = f(a)^*$, whenever $a\in A_{sa}$.

\begin{lemma}\label{l symmetric hol functions}
Let  $f:B_A (0,\varrho) \longrightarrow B$ be a holomorphic mapping, where $A$ and $B$ are C$^*$-algebras, and let $\displaystyle f=
\sum_{k=0}^\infty P_k$ be its Taylor series at zero, which is uniformly converging in $U=B_A (0,\delta)$.
Then the mapping $f$ is symmetric on $U$ {\rm(}i.e. $f(A_{sa}\cap U) \subseteq B_{sa}${\rm)} if, and only
if, $P_k$ is symmetric {\rm(}i.e. $P_k (A_{sa}) \subseteq B_{sa}${\rm)} for every $k\in \mathbb{N}\cup \{0\}$.$\hfill\Box$
\end{lemma}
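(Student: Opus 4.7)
The plan is to prove both implications separately. The backward direction is immediate: if every $P_k$ is symmetric, then for any $a\in A_{sa}\cap U$ the series $f(a)=\sum_{k=0}^\infty P_k(a)$ is a norm-convergent sum of elements of the norm-closed real subspace $B_{sa}$, hence $f(a)\in B_{sa}$.

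For the forward direction, the plan is to test against symmetric functionals on $B$. As the paragraph preceding the lemma recalls, $b\in B_{sa}$ if and only if $\varphi(b)\in\RR$ for every symmetric $\varphi\in B^*$. So I would fix $a\in A_{sa}\cap U$ and a symmetric functional $\varphi\in B^*$, and reduce to a one-variable situation by defining
\[
g_{a,\varphi}(\lambda):=\varphi\bigl(f(\lambda a)\bigr),\qquad \lambda\in D_\CC(0,r),
\]
where $r>0$ is chosen small enough that $\lambda a\in U$ whenever $|\lambda|\le r$. Uniform convergence of $\sum_k P_k$ on $U$ yields the power-series representation
\[
g_{a,\varphi}(\lambda)=\sum_{k=0}^\infty \lambda^k\,\varphi\bigl(P_k(a)\bigr),\qquad |\lambda|<r.
\]

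Now for real $\lambda\in(-r,r)$ the element $\lambda a$ lies in $A_{sa}\cap U$, so the hypothesis gives $f(\lambda a)\in B_{sa}$ and therefore $g_{a,\varphi}(\lambda)\in\RR$. A holomorphic function on a disc centred at $0$ taking real values on the real axis has real Taylor coefficients (an immediate consequence of the Cauchy integral formula already displayed in the paper, or of Schwarz reflection applied to $\overline{g_{a,\varphi}(\bar\lambda)}$). Hence $\varphi(P_k(a))\in\RR$ for every $k\ge 0$, and varying $\varphi$ over the symmetric functionals of $B^*$ gives $P_k(a)\in B_{sa}$, as desired.

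I do not expect a genuine obstacle: the argument is a routine reflection trick combined with the standard characterization of $B_{sa}$ via symmetric functionals. The one point that needs mild care is choosing $r$ so that $g_{a,\varphi}$ admits the stated power-series expansion, but this is guaranteed by the assumed uniform convergence of $\sum_k P_k$ on $U$.
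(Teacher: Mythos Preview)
Your proof is correct and follows exactly the route the paper indicates: reduce to a scalar holomorphic function via symmetric functionals and recover the Taylor coefficients by the Cauchy integral formula displayed just before the lemma. The only cosmetic point is that you fix $a\in A_{sa}\cap U$ while the conclusion is stated for all $a\in A_{sa}$; but your argument works verbatim for any $a\in A_{sa}$ once $r$ is taken small enough (or one invokes homogeneity of $P_k$).
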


\begin{definition}\label{def pair OP} Let $S,T: A\to B$ be a couple of mappings between two C$^*$-algebras. We shall say that the pair $(S,T)$ is orthogonality preserving on a subset $U\subseteq A$ if $S(a)\perp T(b)$ whenever $a\perp b$ in $U$. When $a b=0$ in $U$ implies $S(a) T(b)=0$ in $B$, we shall say that $(S,T)$ preserves zero products on $U$.
\end{definition}

We observe that a mapping $T: A\to B$ is orthogonality preserving in the usual sense if and only if the pair $(T,T)$ is orthogonality preserving. We also notice that $(S,T)$ is orthogonality preserving (on $A_{sa}$) if and only if $(T,S)$ is orthogonality preserving (on $A_{sa}$).\smallskip

Our next result assures that the $n$-homogeneous polynomials appearing in the Taylor series of an orthogonality preserving holomorphic mapping between C$^*$-algebras are pairwise orthogonality preserving.

\begin{proposition}\label{p OP holom mappings}
Let  $f:B_A (0,\varrho) \longrightarrow B$ be a holomorphic mapping, where $A$ and $B$ are C$^*$-algebras, and let $\displaystyle f=
\sum_{k=0}^\infty P_k$ be its Taylor series at zero, which is uniformly converging in $U=B_A (0,\delta)$. The following statements hold:\begin{enumerate}[$(a)$]
\item The mapping $f$ is orthogonally preserving on $U$ {\rm(}respectively, orthogonally preserving on $A_{sa}\cap U${\rm)} if, and only if, $P_0=0$ and the pair $(P_n,P_m)$ is orthogonality preserving {\rm(}respectively, orthogonally preserving on $A_{sa}${\rm)} for every $n,m\in \mathbb{N}$.
\item The mapping $f$ preserves zero products on $U$ if, and only if, $P_0 =0$ and for every $n,m\in \mathbb{N},$ the pair $(P_n,P_m)$ preserves zero products.
\end{enumerate}
\end{proposition}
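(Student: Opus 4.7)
\emph{Plan.} My approach is to mirror the proofs of Lemmas \ref{l 1.1 in CarLassZal} and \ref{l symmetric hol functions}: I would extract each $P_k$ from $f$ via the Cauchy integral formula
$$P_k(a)=\frac{1}{2\pi i}\int_{\gamma}\frac{f(\lambda a)}{\lambda^{k+1}}\,d\lambda,$$
combined with absolute convergence of the Taylor expansion. First I would note that for every fixed $a\in U$, a Cauchy estimate applied on a disc of radius slightly larger than $1$ in the scalar direction $\lambda\mapsto f(\lambda a)$ yields $\sum_{k\ge 0}\|P_k(a)\|<\infty$; hence $f(a)=\sum_{k}P_k(a)$ converges absolutely in $B$, which legalises the algebraic rearrangements driving the whole argument.

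\emph{``If'' direction.} Suppose $P_0=0$ and every pair $(P_n,P_m)$ is orthogonality preserving (resp.\ zero-product preserving). Given $a\perp b$ in $U$ (resp.\ $ab=0$ in $U$), absolute convergence allows me to multiply the two series term by term:
$$f(a)f(b)^*=\sum_{n,m\ge 1}P_n(a)\,P_m(b)^*=0,\qquad f(b)^*f(a)=\sum_{n,m\ge 1}P_m(b)^*\,P_n(a)=0,$$
each coefficient vanishing by the pair hypothesis; the zero-product case is identical with $P_m(b)$ in place of $P_m(b)^*$.

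\emph{``Only if'' direction.} Given $a\perp b$ in $U$, observe that $\lambda a\perp\mu b$ for all $\lambda,\mu\in\mathbb{C}$, and $\lambda a,\mu b\in U$ once $|\lambda|,|\mu|$ are sufficiently small. The orthogonality hypothesis on $f$ then gives $f(\lambda a)f(\mu b)^*=0$, which expands to
$$\sum_{n,m\ge 0}\lambda^{n}\overline{\mu}^{\,m}\,P_n(a)\,P_m(b)^*=0.$$
Fixing such a $\mu$ and applying Cauchy's formula in the $\lambda$-variable peels off each $\lambda^n$-coefficient, yielding $P_n(a)\,f(\mu b)^*=0$ for every $n\ge 0$. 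Repeating the trick in $\mu$, but restricting to $\mu=t\in\mathbb{R}_{>0}$ so that $\overline{\mu}=\mu$ and the remaining series $\sum_m t^m P_n(a)P_m(b)^*$ is a genuine real-variable power series, I obtain $P_n(a)P_m(b)^*=0$ for every pair $n,m$. The companion identity $P_m(b)^*P_n(a)=0$ is produced by the same argument applied to $f(\mu b)^*f(\lambda a)=0$, and homogeneity of the $P_k$'s extends orthogonality preservation of each pair from $U$ to all of $A$. Finally, $0\perp 0$ forces $f(0)f(0)^*=0$ and so $P_0=0$. The $A_{sa}\cap U$ variant is identical after restricting the scalars $\lambda,\mu$ throughout to $\mathbb{R}$, so that $\lambda a,\mu b$ remain self-adjoint.

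\emph{Part (b) and the main obstacle.} The zero-product variant is slightly cleaner: $(\lambda a)(\mu b)=0$ for all complex $\lambda,\mu$, so the series $\sum_{n,m}\lambda^n\mu^m P_n(a)P_m(b)$ is honestly holomorphic in both variables and Cauchy's formula applied in each variable separately pins down every coefficient at once. The one place where I expect real friction is the identification $P_0=0$ in part (b): using $a\cdot 0=0=0\cdot b$ yields $f(a)P_0=0$ and $P_0 f(b)=0$ on $U$, which by the same Cauchy-coefficient trick give $P_k(a)P_0=0=P_0 P_k(b)$ for every $k\ge 0$ and every $a,b\in A$, and in particular $P_0^2=0$; extracting $P_0=0$ itself from these one-sided annihilations is the delicate step that needs a bit of extra care (and may invoke a mild nondegeneracy of $f$). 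Throughout, the only recurring technical point is the extraction of a $B$-valued coefficient from a vanishing (double) power series, which is exactly what Cauchy's integral formula accomplishes one variable at a time.
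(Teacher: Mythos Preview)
Your argument is correct, and in fact cleaner than the paper's. The paper does not use two independent scalars $\lambda,\mu$; it writes $f(ta)f(tb)^*=0$ for a single real parameter $t$, lets $t\to 0$ to kill the lowest surviving coefficient, and then runs an induction on the degree, at each stage replacing $a$ by $sa$ to separate the two new cross-terms $P_1(a)P_{n+1}(b)^*$ and $P_{n+1}(a)P_1(b)^*$. Your two-variable Cauchy extraction does all of this in one stroke and avoids the induction entirely; the only small cosmetic point is that in the $\overline{\mu}$-step you could simply note that $\mu\mapsto P_n(a)f(\mu b)^*$ is anti-holomorphic and read off the coefficients directly, rather than routing through real $t$. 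Your observation that $0\perp 0$ already forces $P_0=f(0)=0$ in part~(a) is also more direct than the paper's limiting argument.

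Your worry about $P_0=0$ in part~(b) is well founded --- and in fact it is not an obstacle in your proof but a defect of the statement itself. The paper disposes of~(b) with ``follows in a similar manner,'' but the similar manner only yields $P_0^2=0$, and in a non-commutative $B$ this does not force $P_0=0$: take $A$ arbitrary, $B=M_2(\mathbb{C})$, and $f$ constantly equal to a nonzero nilpotent. Then $f$ trivially preserves zero products on $U$, all $P_k$ with $k\geq 1$ vanish (so every pair $(P_n,P_m)$ is zero-product preserving), yet $P_0\neq 0$. So the ``only if'' direction of~(b) is false as stated without some extra hypothesis; the paper's applications are unaffected because there $f$ is simultaneously orthogonally additive (or orthogonality preserving), which already gives $P_0=0$. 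You should not try to close this gap --- just record it.
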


\begin{proof} $(a)$ The ``if'' implication is clear. To prove the "only if" implication, let us fix $a,b\in U$ with $a\perp b$. Let us find two positive scalars $r,C$ such that $a,b\in B(0,r)$, and $\|f(x)\|\leq C$ for every $x\in B(0,r)\subset \overline{B}(0,r)\subseteq U$. From the Cauchy estimates we have $\|P_m \| \leq \frac{C}{r^{m}},$ for every $m\in \mathbb{N}\cup \{0\}.$
By hypothesis $f(t a)\perp f(t b)$, for every $r> t> 0$, and hence $$ P_0 (t a) P_0 (t b)^* +  P_0 (t a) \left(\sum_{k=1}^\infty P_k (t b) \right)^*+  \left(\sum_{k=1}^\infty P_k (t a) \right) \left(\sum_{k=0}^\infty P_k (t b)\right)^*=0,$$ and by homogeneity $$ P_0 (a) P_0 (b)^* = -  P_0 (a) \left(\sum_{k=1}^\infty t^{k} P_k (b) \right)^*+  \left(\sum_{k=1}^\infty t^{k} P_k (a) \right) \left(\sum_{k=0}^\infty t^k P_k (b)\right)^*.$$ Letting $t\to 0$, we have $P_0 (a) P_0 (b)^*=0$. In particular, $P_0 =0$.\smallskip

We shall prove by induction on $n$ that the pair $(P_j,P_k)$ is orthogonality preserving on $U$ for every $1\leq j,k \leq n$.
Since $f(t a) f(t b)^* =0$, we also deduce that
$$ P_1 (t a) P_1 (t b)^* +  P_1 (t a) \left(\sum_{k=2}^\infty P_k (t b) \right)^*+  \left(\sum_{k=2}^\infty P_k (t a) \right) \left(\sum_{k=1}^\infty P_k (t b)\right)^*=0,$$ for every $\frac{\min\{\|a\|,\|b\|\}}{r}>t>0,$ which implies that $$t^2 P_1 (a) P_1(b)^* = - t P_1 (a) \left(\sum_{k=2}^\infty t^k P_k (b) \right)^* - \left(\sum_{k=2}^\infty t^k P_k (a) \right) \left(\sum_{k=1}^\infty t^k P_k (b)\right)^*,$$ for every $\frac{\min\{\|a\|,\|b\|\}}{r}>t>0$, and hence
$$\left\| P_1 (a) P_1(b)^*  \right\| \leq t C \| P_1 (a)\|  \sum_{k=2}^\infty \frac{\|b\|^{k}}{r^{k}} t^{k-2} $$ $$+ t C^2 \left(\sum_{k=2}^\infty \frac{\|a\|^{k}}{r^{k}} t^{k-2} \right) \left(\sum_{k=1}^\infty \frac{\|b\|^{k}}{r^{k}} t^{k-1}\right).$$ Taking limit in $t\to 0$, we get $ P_1 (a) P_1(b)^*=0$. Let us assume that $(P_j,P_k)$ is orthogonality preserving on $U$ for every $1\leq j,k \leq n$. Following the argument above we deduce that $$P_1 (a) P_{n+1} (b)^* + P_{n+1} (a) P_{1} (b)^*  = - t P_1 (a) \left( \sum_{j=n+2}^{\infty} t^{j-n-2} P_j (b)\right)^* $$ $$- t \sum_{k=2}^{n} t^{k-2} P_k (a) \left(\sum_{j=n+1}^{\infty} t^{j-n-1} P_j (b) \right)^* - t P_{n+1} (a) \left( \sum_{j=2}^{\infty} t^{j-2} P_j (b)\right)^* $$ $$- t \left(\sum_{k=n+2}^{\infty} t^{k-n-2} P_k (a) \right) \left(\sum_{j=1}^{\infty} t^{j-1} P_j (b) \right)^*,$$ for every $\frac{\min\{\|a\|,\|b\|\}}{r}>|t|>0$. Taking limit in $t\to 0$, we have $$P_1 (a) P_{n+1} (b)^* + P_{n+1} (a) P_{1} (b)^*  =0.$$ Replacing $a$ with $s a$ ($s>0$) we get $$ s P_1 (a) P_{n+1} (b)^* + s^{n+1} P_{n+1} (a) P_{1} (b)^*  =0$$ for every $s>0$, which implies that $$P_1 (a) P_{n+1} (b)^*=0.$$ In a similar manner we prove that $P_k (a) P_{n+1} (b)^*=0$, for every $1\leq k\leq n+1$. The equalities $P_k (b)^* P_j (a)=0$ ($1\leq j,k\leq n+1$) follow similarly.\smallskip

We have shown that for each $n,m\in \mathbb{N}$, $P_n (a) \perp P_m (b)$ whenever $a,b\in U$ with $a\perp b$. Finally, taking $a,b\in A$ with $a\perp b$, we can find a positive $\rho$ such that $\rho a, \rho b\in U$ and $\rho a \perp \rho b$, which implies that $P_n (\rho a) \perp P_m (\rho b)$ for every $n,m\in \mathbb{N}$, witnessing that $(P_n,P_m)$ is orthogonality preserving for every $n,m\in \mathbb{N}$.\smallskip

The proof of $(b)$ follows in a similar manner.
\end{proof}

We can obtain now a corollary which is a first step toward the description of orthogonality preserving, orthogonally additive, holomorphic mappings between C$^*$-algebras.

\begin{corollary}\label{c OP + OA Holom} Let  $f:B_A (0,\varrho) \longrightarrow B$ be a holomorphic mapping, where $A$ and $B$ are C$^*$-algebras, and let $\displaystyle f=
\sum_{k=0}^\infty P_k$ be its Taylor series at zero, which is uniformly converging in $U=B_A (0,\delta)$. Suppose $f$ is orthogonality preserving on $A_{sa}\cap U$ and orthogonally additive {\rm(}respectively, orthogonally additive and zero products preserving{\rm)}. Then there exists a sequence $(T_n)$ of operators from $A$ into $B$ satisfying that the pair $(T_n,T_m)$ is orthogonality preserving on $A_{sa}$ {\rm(}respectively, zero products preserving on $A_{sa}${\rm)} for every $n,m\in \mathbb{N}$ and \begin{equation}\label{eq 1 OP OA Holom} f(x) = \sum_{n=1}^\infty T_n (x^n),
 \end{equation} uniformly in $x\in U$. In particular every $T_n$ is orthogonality preserving {\rm(}respectively, zero products preserving{\rm)} on $A_{sa}$. Furthermore, $f$ is symmetric if and only if every $T_n$ is symmetric.
\end{corollary}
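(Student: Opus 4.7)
The plan is to chain Lemma \ref{l 1.1 in CarLassZal}, Proposition \ref{p OP holom mappings}, and Theorem \ref{thm PaPeVill}, and then bootstrap from the image of $n$-th powers to all of $A_{sa}$. First, by Lemma \ref{l 1.1 in CarLassZal}, orthogonal additivity of $f$ on $U$ forces $P_0=0$ and makes each $P_k$ orthogonally additive on $U$; by homogeneity $P_k$ is then orthogonally additive on all of $A$, and in particular on $A_{sa}$. Applying Theorem \ref{thm PaPeVill} to each $P_n$ ($n\geq 1$) produces a bounded operator $T_n\colon A\to B$ with $P_n(a)=T_n(a^n)$ for every $a\in A$. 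Since $T_n(x^n)=P_n(x)$ by construction, the identity $f(x)=\sum_{n=1}^\infty T_n(x^n)$ and its uniform convergence on $U$ are inherited from the Taylor expansion of $f$.

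To establish pair-orthogonality I would use Proposition \ref{p OP holom mappings}(a): the pair $(P_n,P_m)$ is orthogonality preserving on $A_{sa}$ for every $n,m\in\mathbb{N}$. Transferring this to $(T_n,T_m)$ proceeds in two steps. For positive, orthogonal $x,y\in A$, the elements $a=x^{1/n}$ and $b=y^{1/m}$ are self-adjoint and orthogonal, because they lie in the hereditary C$^*$-subalgebras generated by $x$ and $y$, which are themselves orthogonal; hence
$$T_n(x)\,T_m(y)^*=P_n(a)\,P_m(b)^*=0,$$
and $T_m(y)^*T_n(x)=0$ similarly. For general $x,y\in A_{sa}$ with $x\perp y$, multiplying $xy=0$ on the left by $x_+$ and using $x_+x_-=0$ gives $x_+^2y=0$, whence $\|x_+y\|^2=\|y\,x_+^2\,y\|=0$; iterating this device shows $x_\pm\perp y_\pm$. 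Linearity of $T_n$ and $T_m$ then reduces the general case to the positive one already settled, and the specialization $n=m$ yields orthogonality preservation of each $T_n$ on $A_{sa}$.

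The zero-product variant proceeds identically: Proposition \ref{p OP holom mappings}(b) asserts that every pair $(P_n,P_m)$ preserves zero products, and for positive $x,y$ with $xy=0$ one still has $x^{1/n}y^{1/m}=0$, so the same transfer argument produces a zero-product-preserving pair $(T_n,T_m)$. The symmetry statement is a direct consequence of Lemma \ref{l symmetric hol functions}: if $f$ is symmetric then every $P_n$ is, so for $x\in A_{sa}^{+}$ we have $T_n(x)=P_n(x^{1/n})\in B_{sa}$, and by real-linearity on $A_{sa}=A_{sa}^{+}-A_{sa}^{+}$ we conclude $T_n(A_{sa})\subseteq B_{sa}$; conversely, if each $T_n$ is symmetric, then $P_n(a)=T_n(a^n)\in B_{sa}$ for every $a\in A_{sa}$. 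The main obstacle I expect is precisely the passage from the pair-property for $(P_n,P_m)$, which only controls $P_n(a)\perp P_m(b)$ for $a\perp b$ in $A_{sa}$, to the pair-property for $(T_n,T_m)$ on \emph{all} orthogonal pairs in $A_{sa}$; the functional-calculus identification $T_n(x)=P_n(x^{1/n})$ for positive $x$, combined with the $x_\pm$-decomposition above, is the technical device that bridges this gap.
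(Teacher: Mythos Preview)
Your proposal is correct and follows essentially the same route as the paper: invoke Lemma~\ref{l 1.1 in CarLassZal} and Proposition~\ref{p OP holom mappings} to obtain $P_0=0$, orthogonal additivity of each $P_n$, and pair-orthogonality of $(P_n,P_m)$; apply Theorem~\ref{thm PaPeVill} to produce the $T_n$; and then transfer the pair-property to $(T_n,T_m)$ via positive $n$-th roots together with the $x=x_+-x_-$ decomposition. The paper simply asserts the mutual orthogonality of $a^\sigma$ and $b^\tau$, whereas you spell out the $x_+^2y=0$ trick, and you likewise expand the symmetry claim that the paper dismisses as ``clear from Lemma~\ref{l symmetric hol functions}''; these are elaborations rather than a different argument.
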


\begin{proof} Combining Lemma \ref{l 1.1 in CarLassZal} and Proposition \ref{p OP holom mappings}, we deduce that $P_0=0$, $P_n$ is orthogonally additive and $(P_n,P_m)$ is orthogonality preserving on $A_{sa}$ for every $n,m$ in $\mathbb{N}.$ By Theorem \ref{thm PaPeVill}, for each natural $n$ there exists an operator $T_n : A \to B$ such that $\|P_n\| \leq \|T_n\| \leq 2 \|P_n\|$ and $$P_n (a) = T_n (a^n),$$ for every $a\in A$.\smallskip

Consider now two positive elements $a,b\in A$ with $a\perp b$ and fix $n,m\in \mathbb{N}$. In this case there exist positive elements $c,d$ in $A$ with $c^n =a$ and $d^m =b$ and $c\perp d$. Since the pair $(P_n,P_m)$ is orthogonality preserving on $A_{sa}$, we have $T_n (a) = T_n (c^n) =P_{n} (c)\perp P_m (d) = T_m (d^{m}) = T_m (b).$ Now, noticing that given $a,b$ in $A_{sa}$ with $a\perp b$, we can write $a= a^{+}-b^{-}$ and $b= b^{+}-b^{-}$, where $a^{\sigma},b^{\tau}$ are positive, $a^{+}\perp a^{-},$ $b^{+}\perp b^{-}$ and $a^{\sigma}\perp b^{\tau},$ for every $\sigma,\tau\in \{+,-\},$ we deduce that $T_n(a)\perp T_m(b)$. This shows that the pair $(T_n,T_m)$ is orthogonality preserving on $A_{sa}$.\smallskip

When $f$ orthogonally additive and zero products preserving the pair $(T_n,T_m)$ is zero products preserving on $A_{sa}$ for every $n,m\in \mathbb{N}$. The final statement is clear from Lemma \ref{l symmetric hol functions}.
\end{proof}

It should be remarked here that if a mapping $f:B_{A}(0,\delta) \longrightarrow B$ is given by an expression of the form in \eqref{eq 1 OP OA Holom} which uniformly converging in $U=B_{A}(0,\delta)$ where $(T_n)$ is a sequence of operators from $A$ into $B$ such that the pair $(T_n,T_m)$ is orthogonality preserving on $A_{sa}$ {\rm(}respectively, zero products preserving on $A_{sa}${\rm)} for every $n,m\in \mathbb{N}$, then $f$ is  orthogonally additive and orthogonality preserving on $A_{sa}\cap U$ {\rm(}respectively, orthogonally additive and zero products preserving{\rm)}.

\section{Orthogonality preserving pairs of operators}

Let $A$ and $B$ be two C$^*$-algebras. In this section we shall study those pairs of operators $S,T: A\to B$ satisfying that $S, T$ and the pair $(S,T)$ preserve orthogonality on $A_{sa}$. Our description generalizes some of the results obtained by M. Wolff in \cite{Wol} because a (symmetric) mapping $T: A\to B$ is orthogonality preserving on $A_{sa}$ if and only if the pair $(T,T)$ enjoys the same property. In particular, for every $^*$-homomorphism $\Phi : A \to B$, the pair $(\Phi,\Phi)$ preservers orthogonality. The same statement is true whenever $\Phi$ is a $^*$-anti-homomorphism, or a Jordan $^*$-homomorphism, or a triple homomorphism for the triple product $\J abc = \frac12 ( ab^*c + c b^* a)$.\smallskip

We observe that $S,T$ being symmetric implies that $(S,T)$ is orthogonality preserving on $A_{sa}$ if and only if $(S,T)$ is zero products preserving on $A_{sa}$. We shall offer here a newfangled and simplified proof which is also valid for pairs of operators.\smallskip

Let $a$ be an element in a von Neumann algebra $M$. We recall that the \emph{left} and \emph{right} \emph{support projections}
of $a$ (denoted by $l(a)$ and $d(a)$) are defined as follows: $l(a)$ (respectively, $d(a)$) is the smallest projection
$p\in M$ (respectively, $q\in M$) with the property that $pa = a$ (respectively, $aq=a$). It is known that when $a$ is hermitian $d(a) = l(a)$ is called the \emph{support} or \emph{range projection} of $a$ and is denoted by $s(a)$. It is also known that, for each $a=a^*$, the sequence $(a^{\frac{1}{3^n}})$ converges in the strong$^*$-topology of $M$ to $s(a)$ (cf. \cite[\S 1.10 and 1.11]{S}).\smallskip


An element $e$ in a C$^*$-algebra $A$ is said to be a \emph{partial isometry} whenever $e e^* e = e$ (equivalently, $e e^*$ or $e^* e$ is a projection in $A$). For each partial isometry $e$, the projections $e e^*$ and $e^* e$ are called the left and right support projections associated to $e$, respectively. Every partial isometry $e$ in $A$ defines a Jordan product and an involution on $A_e (e):=ee^* A e^*e$ given by $a \bullet_{_{e}} b =\frac12 ( a e^* b + b e^* a)$ and $a^{\sharp_{_e}} =  ea^* e$ ($a,b\in A_2 (e)$). It is known that $(A_2 (e), \bullet_{_{e}}, {\sharp_{_e}})$ is a unital JB$^*$-algebra with respect to its natural norm and $e$ is the unit element for the Jordan product $\bullet_{_{e}}$.\smallskip

Every element $a$ in a C$^*$-algebra $A$ admits a \emph{polar decomposition} in $A^{**}$, that is, $a$ decomposes uniquely as follows: $a = u |a|$, where $|a| = (a^*a) ^{\frac12}$ and $u$ is a partial isometry in $A^{**}$ such that $u^* u = s(|a|)$ and $u u^* = s(|a^*|)$ (compare \cite[Theorem 1.12.1]{S}). Observe that $u u^* a = a u^* u =u$. The unique partial isometry $u$ appearing in the polar decomposition of $a$ is called the range partial isometry of $a$ and is denoted by $r(a)$. Let us observe that taking $c= r(a) |a|^{\frac13}$, we have $c c^* c = a$. It is also easy to check that for each $b\in A$ with $b= r(a) r(a)^* b$ (respectively, $b= b r(a)^* r(a)$) the condition $a^* b =0$ (respectively, $b a^*=0$) implies $b=0$. Furthermore, $a\perp b$ in $A$ if and only if $r(a) \perp r(b)$ in $A^{**}$.\smallskip

We begin with a basic argument in the study of orthogonality preserving operators between C$^*$-algebras whose proof is inserted here for completeness reasons. Let us recall that for every C$^*$-algebra $A$, the \emph{multiplier algebra} of $A$, $M(A)$, is the set of all elements $x\in A^{**}$ such that for each $Ax, xA \subseteq A$. We notice that $M(A)$ is a C$^*$-algebra and contains the unit element of $A^{**}$.

\begin{lemma}\label{l multiplier} Let $A$ and $B$ be C$^*$-algebras and let $S,T:A \to B$ be a pair of operators.\begin{enumerate}[$(a)$]
\item  The pair $(S,T)$ preserves orthogonality {\rm(}on $A_{sa}${\rm)} if and only if the pair $(S^{**}|_{M(A)},T^{**}|_{M(A)})$ preserves orthogonality {\rm(}on $M(A)_{sa}${\rm)};
\item  The pair $(S,T)$ preserves zero products {\rm(}on $A_{sa}${\rm)} if and only if the pair $(S^{**}|_{M(A)},T^{**}|_{M(A)})$ preserves zero products {\rm(}on $M(A)_{sa}${\rm)}.
\end{enumerate}
\end{lemma}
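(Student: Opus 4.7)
The ``$\Leftarrow$'' implications are immediate: since $A\subseteq M(A)$, $A_{sa}\subseteq M(A)_{sa}$, $S^{**}|_{A}=S$, $T^{**}|_{A}=T$, and the canonical embedding $B\hookrightarrow B^{**}$ is a $^*$-monomorphism, any preservation property of $(S^{**}|_{M(A)},T^{**}|_{M(A)})$ restricts to one of $(S,T)$. For the ``$\Rightarrow$'' implications the plan rests on three standard ingredients: $S^{**},T^{**}:A^{**}\to B^{**}$ are weak$^*$-weak$^*$ continuous, multiplication in the von Neumann algebra $B^{**}$ is separately weak$^*$ continuous, and any approximate unit $(e_\lambda)$ of $A$ converges strong$^*$ to $1_{A^{**}}$. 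The strategy is to approximate every pair $x,y\in M(A)$ (respectively $M(A)_{sa}$) satisfying the relevant vanishing by pairs $(a_\nu,b_\xi)$ in $A$ (respectively $A_{sa}$) satisfying the \emph{same} vanishing exactly for every $\nu,\xi$ and converging to $x,y$ in the strong$^*$ topology; passing to iterated weak$^*$ limits in the identity furnished by the hypothesis will then conclude.

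For the self-adjoint case, continuous functional calculus in $M(A)$ allows the splitting $x=x^+-x^-$, $y=y^+-y^-$ into pairwise orthogonal (respectively zero-product) summands in $M(A)_+$, and linearity of $S^{**},T^{**}$ reduces everything to pairs of positives $x,y\in M(A)_+$ with $xy=0$. For such a pair the approximation is the sandwich
$$
a_\lambda:=x^{1/2}e_\lambda x^{1/2},\qquad b_\mu:=y^{1/2}e_\mu y^{1/2}.
$$
Here $x^{1/2},y^{1/2}\in M(A)_+$ by functional calculus in $M(A)$; the containment $a_\lambda,b_\mu\in A_+$ follows from $M(A)\cdot A\cdot M(A)\subseteq A$; the exact identity $a_\lambda b_\mu=x^{1/2}e_\lambda(x^{1/2}y^{1/2})e_\mu y^{1/2}=0$ uses $x^{1/2}y^{1/2}=0$; and strong$^*$ continuity of $z\mapsto x^{1/2}zx^{1/2}$ on bounded sets gives $a_\lambda\to x$ and $b_\mu\to y$ strong$^*$. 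Applying the hypothesis to $A$ and then taking iterated weak$^*$ limits completes this case.

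For the full non-self-adjoint case I would run the same blueprint with a \emph{double} sandwich built from the polar decomposition $x=u|x|$ in $A^{**}$:
$$
a_{n,\lambda}:=(xx^*)^{1/n}\,e_\lambda\,x\,e_\lambda\,(x^*x)^{1/n}\in A,
$$
and analogously $b_{m,\mu}$ for $y$. Functional calculus yields $(xx^*)^{1/n}x(x^*x)^{1/n}=u|x|^{1+4/n}\to x$ in norm as $n\to\infty$, while $a_{n,\lambda}\to(xx^*)^{1/n}x(x^*x)^{1/n}$ strong$^*$ as $\lambda\to\infty$. From $x\perp y$ in $M(A)$ one derives $s(xx^*)\perp s(yy^*)$ and $s(x^*x)\perp s(y^*y)$ in $A^{**}$; from $xy=0$ one derives the one-sided analogue $s(x^*x)\perp s(yy^*)$. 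In either case the corresponding central factors, $(x^*x)^{1/n}(y^*y)^{1/m}$ and $(yy^*)^{1/m}(xx^*)^{1/n}$ for orthogonality, or $(x^*x)^{1/n}(yy^*)^{1/m}$ for zero products, vanish identically because the fractional powers are supported on the relevant orthogonal projections; hence $a_{n,\lambda}\perp b_{m,\mu}$ (respectively $a_{n,\lambda}b_{m,\mu}=0$) in $A$ for every $n,m,\lambda,\mu$. Iterated weak$^*$ limits in $\lambda,\mu$ followed by norm limits in $n,m$, combined with separate weak$^*$ continuity of multiplication in $B^{**}$ and of the adjoint, then deliver the desired identity for $S^{**}(x)$ and $T^{**}(y)$.

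The main obstacle is securing \emph{exact}, not merely asymptotic, orthogonality or zero product of the approximations inside $A$, so that the hypothesis on $A$ can be invoked uniformly across the net; a naive choice such as $a_\lambda=e_\lambda x e_\lambda$ only achieves asymptotic vanishing. The sandwich construction above is tailored precisely to this: the support-carrying positive factors of $x$ and $y$, which live in $M(A)$, are placed on the outer sides of an approximate unit from $A$, giving membership in $A$ via $M(A)AM(A)\subseteq A$, while orthogonality of the corresponding spectral supports kills the central product throughout the approximation.
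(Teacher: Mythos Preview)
Your argument is correct, but the paper takes a shorter and more uniform route. Instead of splitting into the self-adjoint and general cases and building two different sandwich approximations, the paper exploits the cube-root factorisation that it had set up just before the lemma: for any $a\in M(A)$ one has $c\in M(A)$ with $cc^{*}c=a$ (namely $c=r(a)\,|a|^{1/3}$), and if $a\perp b$ then $c\perp d$. Consequently $cxc\perp dyd$ in $A$ for \emph{every} $x,y\in A$, so the hypothesis applies to all such pairs simultaneously. Goldstine's theorem then supplies bounded nets $(x_\lambda),(y_\mu)\subset A$ weak$^*$-converging to $c^{*},d^{*}$, and iterated weak$^*$ limits turn $S(cx_\lambda c)\,T(dy_\mu d)^{*}=0$ into $S^{**}(a)\,T^{**}(b)^{*}=0$. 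This handles the full case and the self-adjoint case at once, with no need for positive/negative decompositions, approximate identities, or the secondary norm limit in $n,m$.

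What your approach buys is explicitness: the approximants $x^{1/2}e_\lambda x^{1/2}$ and $(xx^{*})^{1/n}e_\lambda\,x\,e_\lambda(x^{*}x)^{1/n}$ are concretely constructed from an approximate unit, and the mechanism by which exact orthogonality (rather than asymptotic orthogonality) is forced is made transparent via the support-projection discussion. The paper's version is terser but relies on the reader recognising that the cube-root element $r(a)|a|^{1/3}$ actually lies in $M(A)$ and that $c\perp d$ propagates to $cxc\perp dyd$ for arbitrary $x,y$; once those are granted, it avoids the case split and the double limit entirely.
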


\begin{proof} $(a)$ The ``if'' implication is clear. Let $a,b$ be two elements in $M(A)$ with $a\perp b$. We can find two elements $c$ and $d$ in $M(A)$ satisfying $c c^* c =a$, $d d^* d =b$ and $c\perp d$. Since $c x c \perp dy d$, for every $x,y$ in $A$, we have $T(cxc) \perp T(dyd)$ for every $x,y\in A$.
By Goldstine's theorem we find two bounded nets $(x_{\lambda})$ and $(y_{\mu})$ in $A$,
converging in the weak$^*$ topology of $A^{**}$ to $c^*$ and $d^*$, respectively. Since $T(c x_{\lambda} c) T(dy_{\mu} d)^* = T(d y_{\mu} d)^* T(cx_{\lambda} c) = 0$, for every $\lambda, \mu$, $T^{**}$ is weak$^*$-continuous, the product of $A^{**}$ is separately weak$^*$-continuous and the involution of $A^{**}$ also is weak$^*$-continuous, we get $T^{**} (cc^*c) T^{**} (dd^* d) = T^{**} (a) T^{**} (b)^* =0= T^{**} (b)^* T^{**} (a),$ and hence $T^{**} (a) \perp T^{**} (b)$, as desired.\smallskip

The proof of $(b)$ follows by a similar argument.
\end{proof}

\begin{proposition}\label{p symmetric OP pairs} Let $S,T: A\to B$ be operators between C$^*$-algebras such that $(S,T)$ is orthogonality preserving on $A_{sa}$. Let us denote $h:= S^{**} (1)$ and $k:= T^{**} (1)$. Then the identities $$S(a) T(a^*)^* = S(a^2) k^*= h T((a^2)^*)^*,$$
$$ T(a^*)^* S(a) =k^* S(a^2)= h T((a^2)^*)^* h,$$
$$S(a) k^* = h T(a^*)^*, \hbox{ and, } k^* S(a) = T(a^*)^* h$$ hold for every $a\in A$.
\end{proposition}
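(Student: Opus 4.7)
I reduce to the case of self-adjoint $a$ in a unital C$^{*}$-algebra, prove the identities there via norm approximation of $a$ by linear combinations of pairwise orthogonal spectral projections in $A^{**}$, and recover the general case by polarisation.

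\emph{Reduction and polarisation.}  By Lemma \ref{l multiplier}, the pair $(S^{**}|_{M(A)},\,T^{**}|_{M(A)})$ preserves orthogonality on $M(A)_{sa}$, and the identities for $(S,T)$ at $a\in A$ are special cases of the corresponding identities for $(S^{**}|_{M(A)},\,T^{**}|_{M(A)})$ at $a\in M(A)$; thus I may assume $A$ is unital.  For arbitrary $a=b_{1}+ib_{2}$ with $b_{j}\in A_{sa}$, each quadratic identity splits into a diagonal part (the self-adjoint identity applied to $b_{1}$ and $b_{2}$ separately) and an off-diagonal part which collapses to
\[ S(b_{1})T(b_{2})^{*}+S(b_{2})T(b_{1})^{*}=S(b_{1}b_{2}+b_{2}b_{1})\,k^{*} \]
together with its $T^{*}S$ companion; both come from the self-adjoint identity at $b_{1}+b_{2}$ after subtracting the identities at $b_{1}$ and at $b_{2}$.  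The two linear identities extend from $A_{sa}$ to $A$ by complex linearity.

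\emph{Self-adjoint core.}  For $a=a^{*}\in A$, the spectral theorem in $A^{**}$ provides, for each $n\in\NN$, pairwise orthogonal spectral projections $p_{1}^{(n)},\ldots,p_{N_{n}}^{(n)}\in A^{**}$ summing to $1$, coming from a partition of $\sigma(a)$ into Borel sets of diameter at most $1/n$, and real scalars $\lambda_{i}^{(n)}$, so that
\[ a_{n}:=\sum_{i=1}^{N_{n}}\lambda_{i}^{(n)}\,p_{i}^{(n)}\longrightarrow a \quad\text{and}\quad a_{n}^{2}=\sum_{i=1}^{N_{n}}(\lambda_{i}^{(n)})^{2}\,p_{i}^{(n)}\longrightarrow a^{2} \]
in the norm of $A^{**}$.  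Granted the lifting of OP to these spectral projections (handled next), for each $p=p_{i}^{(n)}$ the decompositions $h=S^{**}(p)+S^{**}(1-p)$ and $k^{*}=T^{**}(p)^{*}+T^{**}(1-p)^{*}$, together with $S^{**}(p)\perp T^{**}(1-p)$, yield
\[ S^{**}(p)T^{**}(p)^{*}=S^{**}(p)k^{*}=h\,T^{**}(p)^{*},\qquad T^{**}(p)^{*}S^{**}(p)=k^{*}S^{**}(p)=T^{**}(p)^{*}h. \]
Expanding $S^{**}(a_{n})T^{**}(a_{n})^{*}$ and $T^{**}(a_{n})^{*}S^{**}(a_{n})$ as double sums and killing off-diagonal terms by orthogonality yields all six identities at $a_{n}$; boundedness of $S^{**},T^{**}$ and the joint norm continuity of multiplication in $B^{**}$ then transmit them to $a$.

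\emph{Main obstacle: lifting OP to spectral projections.}  For disjoint Borel $B_{1},B_{2}\subseteq\sigma(a)$ with $e_{j}=\chi_{B_{j}}(a)\in A^{**}$, I choose continuous $f_{n}^{(j)}:\sigma(a)\to[0,1]$ (adjusting the partition to avoid the at most countably many atoms of the spectral measure of $a$) satisfying $f_{n}^{(1)}f_{m}^{(2)}\equiv 0$ and $f_{n}^{(j)}(a)\to e_{j}$ in the weak$^{*}$-topology of $A^{**}$.  Then $f_{n}^{(1)}(a),f_{m}^{(2)}(a)\in A_{sa}$ are orthogonal, so $S(f_{n}^{(1)}(a))T(f_{m}^{(2)}(a))^{*}=0$ for all $n,m$ by hypothesis.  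For each $\phi\in B^{*}$ I pass to the weak$^{*}$-limit in two stages: first in $m$, using weak$^{*}$-continuity of left multiplication by the fixed $S(f_{n}^{(1)}(a))$ on $B^{**}$ and the weak$^{*}$-convergence $T(f_{m}^{(2)}(a))\to T^{**}(e_{2})$; then in $n$, using weak$^{*}$-continuity of $S^{**}$ and of right multiplication by the fixed $T^{**}(e_{2})^{*}$.  This gives $\phi(S^{**}(e_{1})T^{**}(e_{2})^{*})=0$ for every $\phi$, hence $S^{**}(e_{1})T^{**}(e_{2})^{*}=0$; the mirror argument gives $T^{**}(e_{2})^{*}S^{**}(e_{1})=0$.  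The two-stage structure is forced by the fact that multiplication in $B^{**}$ is only separately weak$^{*}$-continuous, and this is the only part of the proof that goes beyond routine algebra.
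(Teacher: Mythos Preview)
Your approach is genuinely different from the paper's: you attempt a direct spectral-approximation argument, whereas the paper fixes $\varphi\in B^{*}$, observes that the bilinear form $V_{\varphi}(a,b)=\varphi(S(a)T(b^{*})^{*})$ vanishes on zero products in $A_{sa}$, and then invokes Goldstein's theorem \cite[Theorem 1.10]{Gold} to write $V_{\varphi}(a,b)=\omega_{1}(ab)+\omega_{2}(ba)$ for all $a,b\in A$. Specialising to $b=1$ and to $b=a$ yields the identities at once.

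Your argument, however, has a real gap at the step you yourself flag as the main obstacle. The claim that the spectral measure of $a$ in $A^{**}$ has ``at most countably many atoms'' is false in general. Take $A=C([0,1])$ and $a$ the identity function; then for \emph{every} $t\in[0,1]$ the spectral projection $\chi_{\{t\}}(a)\in A^{**}$ is nonzero, since the point mass $\delta_{t}\in A^{*}$ satisfies $\langle \chi_{\{t\}}(a),\delta_{t}\rangle=1$. Thus no partition of $\sigma(a)$ can avoid these atoms, and for adjacent intervals $I_{i}=[t_{i-1},t_{i})$ and $I_{i+1}=[t_{i},t_{i+1})$ you cannot find continuous functions with disjoint supports whose images under the functional calculus converge weak$^{*}$ to both $\chi_{I_{i}}(a)$ and $\chi_{I_{i+1}}(a)$: any continuous $f$ supported in $[t_{i-1},t_{i}]$ and $g$ supported in $[t_{i},t_{i+1}]$ with $fg=0$ must satisfy $f(t_{i})=0$ or $g(t_{i})=0$, so one of the sequences misses the nonzero projection $\chi_{\{t_{i}\}}(a)$ in the limit. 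Consequently the lifting of the orthogonality-preserving property to arbitrary spectral projections of $a$ in $A^{**}$ is not established, and with it the whole norm-approximation scheme collapses. The phrase ``spectral measure of $a$'' suggests you may be implicitly passing to a fixed Hilbert-space representation, where eigenvalues are indeed countable; but those spectral projections live in $B(H)$, not in $A^{**}$, and $S^{**},T^{**}$ are only defined on the latter.

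It is not clear that your route can be repaired without essentially reproving (at least the commutative case of) Goldstein's theorem: the statement that a bounded bilinear form on $C(\sigma(a))$ vanishing on disjointly supported pairs is of the form $(f,g)\mapsto\omega(fg)$ is already the heart of the matter, and that is precisely what the paper imports as a black box.
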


\begin{proof} By Lemma \ref{l multiplier}, we may assume, without loss of generality, that $A$ is unital.
$(a)$ For each $\varphi \in B^*$, the continuous bilinear form $V_{\varphi} : A \times A \to \mathbb{C}$, $V_{\varphi} (a,b)= \varphi (S(a) T(b^*)^*)$ is orthogonal, that is, $V_{\varphi} (a,b) = 0$, whenever $a b=0 $ in $A_{sa}$. By Goldstein's theorem \cite[Theorem 1.10]{Gold} there exist functionals $\omega_1,\omega_2\in A^*$ satisfying that $$V_{\varphi} (a,b) = \omega_1 (a b) + \omega_2 (b a),$$ for all $a,b\in A$. Taking $b=1$ and $a=b$ we have $$\varphi (S(a) k^*) = V_{\varphi} (a,1)= V_{\varphi} (1,a) = \varphi (h T(a)^*)$$ and $$\varphi (S(a) T(a)^*) = \varphi (S(a^2) k^*)=  \varphi (h T(a^2)^*),$$ for every $a\in A_{sa}$, respectively. Since $\varphi$ was arbitrarily chosen, we get, by linearity, $S(a) k^* = h T(a^*)^*$ and $S(a) T(a^*)^* = S(a^2) k^*= h T((a^2)^*)^*$, for every $a\in A$. The other identities follow in a similar way, but replacing $V_{\varphi} (a,b)= \varphi (S(a) T(b^*)^*)$ with $V_{\varphi} (a,b)= \varphi ( T(b^*)^* S(a))$.
\end{proof}

\begin{lemma}\label{c OP pairs Jordan *-homomorphisms}
Let $J_1,J_2: A\to B$ be Jordan $^*$-homomorphism between C$^*$-algebras. The following statements are equivalent: \begin{enumerate}[$(a)$]
\item The pair $(J_1,J_2)$ is orthogonality preserving on $A_{sa}$;
\item The identity $$ J_1 (a) J_2 (a) = J_1 (a^2) J_2^{**} (1) = J_1^{**} (1) J_2 (a^2),$$ holds for every $a\in A_{sa}$;
\item The identity $$ J_1^{**} (1) J_2 (a) = J_1 (a) J_2^{**} (1),$$ holds for every $a\in A_{sa}$.
\end{enumerate} Furthermore, when $J_1^{**}$ is unital, $J_2 (a) = J_1 (a) J_2^{**} (1)= J_2^{**} (1)  J_1 (a),$ for every $a$ in $A.$
\end{lemma}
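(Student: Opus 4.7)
The plan is to prove the equivalences in the cyclic order $(a)\Rightarrow(b)\Rightarrow(c)\Rightarrow(a)$, and then deduce the furthermore.

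For $(a)\Rightarrow(b)$, the work is done by Proposition \ref{p symmetric OP pairs}: we simply specialize its identities $S(a)T(a^*)^* = S(a^2)k^* = hT((a^2)^*)^*$ to $S=J_1$, $T=J_2$, and $a\in A_{sa}$. Since each $J_i$ is $^*$-preserving, $J_i(a^*)^* = J_i(a)$ for $a\in A_{sa}$; moreover $h=J_1^{**}(1)$ and $k=J_2^{**}(1)$ are projections (the image of $1$ under a Jordan $^*$-homomorphism is a self-adjoint idempotent), so $h^*=h$ and $k^*=k$. Substituting yields $(b)$.

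For $(b)\Rightarrow(c)$, I would pass, via normality of $J_i^{**}$, to identities on $A^{**}$ (alternatively use Lemma \ref{l multiplier} to assume $A$ is unital). The idea is to polarize (b) by replacing $a$ with $a+1$: expand
\begin{equation*}
(J_1(a)+h)(J_2(a)+k) \;=\; J_1\bigl((a+1)^2\bigr)k \;=\; (J_1(a^2)+2J_1(a)+h)\,k,
\end{equation*}
use $J_1(a)J_2(a)=J_1(a^2)k$ from $(b)$, and cancel to obtain $hJ_2(a)=J_1(a)k$.

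The main obstacle is $(c)\Rightarrow(a)$. Given $a,b\in A_{sa}$ with $a\perp b$, let $p=s(a)$ and $q=s(b)$ be the range projections in $A^{**}$; these are orthogonal projections. Extending $J_1,J_2$ to normal Jordan $^*$-homomorphisms $J_i^{**}:A^{**}\to B^{**}$, the images $J_1^{**}(p)$ and $J_1^{**}(q)$ are again orthogonal projections (their Jordan product vanishes, and two projections with vanishing Jordan product are orthogonal). The crucial observation is that the image of a Jordan $^*$-homomorphism with unit projection $e$ is contained in the Peirce-$2$ subspace $eB^{**}e$; in particular $J_1^{**}(p)\,h = J_1^{**}(p)$. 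Extending $(c)$ by weak$^*$-continuity and multiplying on the left by $J_1^{**}(p)$ gives
\begin{equation*}
J_1^{**}(p)\,J_2^{**}(q) \;=\; J_1^{**}(p)\,h\,J_2^{**}(q) \;=\; J_1^{**}(p)\,J_1^{**}(q)\,k \;=\; 0,
\end{equation*}
and the adjoint identity $J_2^{**}(a)h = kJ_1^{**}(a)$ (obtained by taking $^*$ of $(c)$) gives $J_2^{**}(q)J_1^{**}(p)=0$ analogously. Finally, since $J_1(a) = J_1^{**}(p)J_1(a)J_1^{**}(p)$ and $J_2(b)=J_2^{**}(q)J_2(b)J_2^{**}(q)$, both products $J_1(a)J_2(b)$ and $J_2(b)J_1(a)$ vanish, and using $J_2(b)=J_2(b)^*$ we conclude $J_1(a)\perp J_2(b)$.

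For the furthermore, with $h=1$ the statement $(c)$ reduces to $J_2(a)=J_1(a)k$ for $a\in A_{sa}$; taking adjoints (using that $J_1(a),J_2(a),k$ are self-adjoint) yields $J_2(a)=kJ_1(a)$, and both equalities extend to all of $A$ by complex linearity.
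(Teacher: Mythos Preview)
Your proof is correct and follows the same cyclic scheme as the paper, but two steps are done more laboriously than necessary.

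For $(b)\Rightarrow(c)$, the paper does not polarize: Proposition~\ref{p symmetric OP pairs} already contains the identity $S(a)k^{*}=hT(a^{*})^{*}$, which for $S=J_1$, $T=J_2$ and $a\in A_{sa}$ \emph{is} $(c)$. Your polarization with $a+1$ works too, but applying $(b)$ to $a+1$ needs a unit; the cleanest repair (staying inside $A$) is to substitute $a^{1/2}$ for $a\ge 0$ in $(b)$, obtaining $J_1(a)k=hJ_2(a)$ for positives and hence for all of $A_{sa}$ by linearity.

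For $(c)\Rightarrow(a)$, the paper's argument is a single line and avoids support projections entirely. Since $h=J_1^{**}(1)$ is a projection acting as a two-sided unit on the range of $J_1$, for $a,b\in A_{sa}$ with $a\perp b$ one has
\[
J_1(a)J_2(b)=J_1(a)\,h\,J_2(b)=J_1(a)\,J_1(b)\,k=0,
\]
using $(c)$ in the middle equality and the fact that a Jordan $^*$-homomorphism is itself orthogonality preserving on $A_{sa}$; the identity $J_2(b)J_1(a)=0$ follows symmetrically. Your detour through $s(a),s(b)$ in $A^{**}$ and the weak$^*$ extension of $(c)$ is valid, but it packages the very same mechanism (convert $hJ_2$ to $J_1k$, then use that $J_1$ kills orthogonal self-adjoints) with more machinery than the statement requires.
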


\begin{proof} The implications $(a)\Rightarrow (b)\Rightarrow (c)$ have been established in Proposition \ref{p symmetric OP pairs}. To see $(c) \Rightarrow (a)$, we observe that $J_i (x) =J_i^{**} (1) J_i (x) J_i^{**} (1)= J_i (x) J_i^{**} (1) = J_i^{**} (1) J_i (x)$, for every $x\in A$. Therefore, given $a,b\in A_{sa}$ with $a\perp b$, we have $J_1 (a) J_2 (b) =  J_1 (a) J_1^{**} (1) J_2 (b) = J_1 (a) J_1 (b) J_2^{**} (1) =0$.
\end{proof}

In \cite[Proposition 2.5]{Wol}, M. Wolff establishes a uniqueness result for
$^*$-homomorphisms between C$^*$-algebras showing that for each pair $(U,V)$ of unital $^*$-homomorphisms from a unital C$^*$-algebra $A$
into a unital C$^*$-algebra $B$, the condition $(U,V)$ orthogonality preserving on $A_{sa}$ implies $U=V$. This uniqueness result is a direct consequence of our previous lemma.\smallskip

Orthogonality preserving pairs of operators can be also used to rediscover the notion of orthomorphism in the sense introduced by Zaanen in \cite{Za}. We recall that an operator $T$ on a C$^*$-algebra $A$ is said to be an \emph{orthomorphism} or a \emph{band preserving} operator when the implication $a\perp b \Rightarrow T(a) \perp b$ holds for every $a,b\in A$. We notice that when $A$ is regarded as an $A$-bimodule,
an operator $T:A\to A$ is an orthomorphism if and only if it is a \emph{local operator} in the sense used by B.E. Johnson in \cite[\S 3]{John01}. Clearly, an operator $T:A\to A$ is an orthomorphism if and only if $(T,Id_{A})$ is orthogonality preserving. The following non-commutative extension of \cite[THEOREM 5]{Za} follows from Proposition \ref{p symmetric OP pairs}.

\begin{corollary}\label{c non-commutatice Zaanen} Let $T$ be an operator on a C$^*$-algebra $A$. Then $T$ is an orthomorphism if and only if $T(a) = T^{**} (1) a = a T^{**} (1)$, for every $a$ in $A$, that is, $T$ is a multiple of the identity on $A$ by an element in its center.$\hfill\Box$
\end{corollary}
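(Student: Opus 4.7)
The plan is to derive the statement directly from Proposition \ref{p symmetric OP pairs} applied to the pair $(T,\operatorname{Id}_A)$. By the observation made immediately before the corollary, $T$ being an orthomorphism is equivalent to $(T,\operatorname{Id}_A)$ being orthogonality preserving on all of $A$, which in particular implies orthogonality preservation on $A_{sa}$; so Proposition \ref{p symmetric OP pairs} applies. I would specialize the roles by taking the ``$S$'' of that proposition to be $T$ and the ``$T$'' of that proposition to be $\operatorname{Id}_A$, so that $h=T^{**}(1)$ while $k=\operatorname{Id}_A^{**}(1)=1_{A^{**}}$. The last two identities of the proposition, $S(a)k^*=h\,T(a^*)^*$ and $k^*S(a)=T(a^*)^*h$, then collapse to
\[
T(a) \;=\; T^{**}(1)\,a \;=\; a\,T^{**}(1)
\]
for every $a\in A$.

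Comparing the two expressions shows that $T^{**}(1)$ commutes with every element of $A$. Since $T^{**}(1)\cdot A$ and $A\cdot T^{**}(1)$ both coincide with $T(A)\subseteq A$, the element $T^{**}(1)$ lies in $M(A)$; combining commutation with $A$, the weak-$^*$ density of $A$ in $M(A)$, and the separate weak-$^*$ continuity of the product of $A^{**}$, one concludes that $T^{**}(1)$ lies in the center of $M(A)$. This yields the forward implication.

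For the converse, if $T(a)=z\,a = a\,z$ with $z$ central in $M(A)$, then for $a\perp b$ in $A$ one immediately has $T(a)b^*=z\,a\,b^*=0$ and $b^*T(a)=b^*a\,z=0$, so $T(a)\perp b$. There is essentially no serious obstacle: Proposition \ref{p symmetric OP pairs} carries all of the content, and the only step that merits a short verification is the passage from ``$T^{**}(1)$ commutes with $A$'' to ``$T^{**}(1)\in Z(M(A))$'', which is a routine weak-$^*$ continuity argument.
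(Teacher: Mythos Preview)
Your proof is correct and follows exactly the route the paper intends: the corollary is stated without proof and is marked as a direct consequence of Proposition~\ref{p symmetric OP pairs}, which you apply to the pair $(T,\operatorname{Id}_A)$ with $k=1_{A^{**}}$, reading off the last two displayed identities. One tiny wording slip: you should invoke the weak$^*$ density of $A$ in $A^{**}$ (not in $M(A)$) to conclude centrality of $T^{**}(1)$, but the argument itself is fine.
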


We recall that two elements $a,$ $b$ in a JB$^*$-algebra $A$ are said to \emph{operator commute} in $A$ if the
multiplication operators $M_a$ and $M_b$ commute, where $M_a$ is defined by $M_a (x) := a\circ x$. That is, $a$ and $b$ operator commute if and only if $(a\circ x) \circ b = a\circ (x\circ b)$ for all $x$ in $A$. An useful result in Jordan theory assures that self-adjoint elements $a$ and $b$ in $A$ generate a JB$^*$-subalgebra that can be realized as a JC$^*$-subalgebra of some $B(H)$ (compare \cite{Wri77}), and, under this identification, $a$ and $b$ commute as elements in $L(H)$ whenever they operator commute in $A$, equivalently $a^2 \circ b = 2 (a\circ b)\circ a - a^2 \circ b$ (cf. Proposition 1 in \cite{Top}).\smallskip

The next lemma contains a property which is probably known in C$^*$-algebra, we include an sketch of the proof because we were unable to find an explicit reference.

\begin{lemma}\label{l operator comm Peirce 2} Let $e$ be a partial isometry in a C$^*$-algebra $A$ and let $a,b$ be two elements in $A_2 (e) = e e^* A e^*e$. Then $a$, $b$ operator commute in the JB$^*$-algebra $(A_2 (e), \bullet_{_{e}}, {\sharp_{_e}})$ if and only if $ae^*$ and $be^*$ operator commute in the JB$^*$-algebra $(A_2 (ee^*), \bullet_{_{ee^*}}, {\sharp_{_{ee^*}}})$, where $x \bullet_{_{ee^*}} y = x\circ y = \frac12 (x y +yx)$, for every $x,y\in A_2 (ee^*)$. Furthermore, when $a$ and $b$ are hermitian elements in $(A_2 (e), \bullet_{_{e}}, {\sharp_{_e}})$, $a$, $b$ operator commute if and only if $ae^*$ and $b e^*$ commute in the usual sense {\rm(}i.e. $ae^*be^*=be^* ae^*${\rm)}.
\end{lemma}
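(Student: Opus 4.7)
The plan is to realize the two JB$^*$-algebras in the statement as isomorphic via right multiplication by $e^*$, and then transport the purely Jordan notion of operator commutativity along that isomorphism. First I would introduce the maps $\Phi: A_2(e) \to A_2(ee^*)$, $\Phi(a) := ae^*$, and $\Psi: A_2(ee^*) \to A_2(e)$, $\Psi(x) := xe$, and verify that they land in the prescribed sets and are mutually inverse bijections. This is a routine manipulation using $ee^*e=e$ and $e^*ee^*=e^*$, together with $a = ee^*ae^*e$ for every $a\in A_2(e)$ and $x = ee^*xee^*$ for every $x\in A_2(ee^*)$.

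Next I would verify that $\Phi$ intertwines the Jordan products and involutions by direct computation:
\begin{equation*}
\Phi(a\bullet_{_{e}} b) = \tfrac{1}{2}(ae^*b + be^*a)e^* = \tfrac{1}{2}\bigl(ae^*\cdot be^* + be^*\cdot ae^*\bigr) = \Phi(a)\bullet_{_{ee^*}}\Phi(b),
\end{equation*}
and, since the involution on $A_2(ee^*)$ is the restriction of the ambient $^*$ (because $ee^*$ is a projection in $A$),
\begin{equation*}
\Phi(a^{\sharp_{_{e}}}) = (ea^*e)e^* = ea^*ee^* = (ee^*)\,(ae^*)^*\,(ee^*) = \Phi(a)^{\sharp_{_{ee^*}}}.
\end{equation*}
Hence $\Phi$ is a Jordan $^*$-isomorphism of JB$^*$-algebras.

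The first equivalence in the lemma then follows at once: operator commutativity is a purely Jordan-algebraic property (the commutation of the multiplication operators $M_a$ and $M_b$ defined through the Jordan product) and as such is preserved by any Jordan isomorphism. Therefore $a,b$ operator commute in $(A_2(e),\bullet_{_{e}})$ if and only if $\Phi(a) = ae^*$ and $\Phi(b) = be^*$ operator commute in $(A_2(ee^*),\bullet_{_{ee^*}})$.

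For the furthermore clause, note that since $\Phi$ is $^*$-preserving, the hermitian elements of $(A_2(e),\bullet_{_{e}},\sharp_{_{e}})$ are mapped bijectively onto the self-adjoint elements of the unital C$^*$-subalgebra $ee^*A ee^* = A_2(ee^*)$ of $A$. It then suffices to invoke the classical fact recalled just before the statement of the lemma (and attributed there to Topping \cite{Top}): two self-adjoint elements of a C$^*$-algebra operator commute with respect to the symmetrized product $x\circ y = \tfrac{1}{2}(xy+yx)$ if and only if they commute in the associative sense. The main technical point I anticipate is the clean justification of this last classical fact; I would either cite it directly, or give a short self-contained argument by using the Shirshov--Cohn theorem to embed the JB$^*$-subalgebra generated by $ae^*$ and $be^*$ into some $B(H)$ and then noting that the Jordan operator-commuting condition forces $[ae^*,be^*]$ to commute with every element of that representation, hence to be a scalar, which for bounded self-adjoints must vanish.
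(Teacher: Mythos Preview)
Your proposal is correct and follows essentially the same route as the paper: both arguments rest on the observation that $R_{e^*}:(A_2(e),\bullet_{e},\sharp_{e})\to (A_2(ee^*),\bullet_{ee^*},\sharp_{ee^*})$, $a\mapsto ae^*$, is a Jordan $^*$-isomorphism, from which the first equivalence is immediate, and both then invoke the classical fact recalled just before the lemma (attributed to Topping) for the self-adjoint case. The paper's proof consists of three sentences, so your explicit verifications of bijectivity and of the product/involution compatibility are added detail rather than a different method.

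One small caution on your optional ``self-contained'' sketch for the furthermore clause: from $[M_x,M_y]=0$ on the Jordan subalgebra $J$ generated by $x=ae^*$ and $y=be^*$ you only obtain $[x,y]\,z=z\,[x,y]$ for $z\in J$, hence centrality of $[x,y]$ in $C^*(x,y)$, not in all of $B(H)$; so ``hence to be a scalar'' is not justified as written. The argument can be completed (e.g.\ via Kleinecke--Shirokov, since $[x,[x,y]]=0$ forces the normal element $[x,y]$ to be quasi-nilpotent, hence zero), but it is cleanest to simply cite the result, as the paper does.
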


\begin{proof} We observe that the mapping $R_{e^*} : (A_2 (e), \bullet_{_{e}}) \to (A_2 (ee^*),\bullet_{_{ee^*}})$, $x\mapsto x e^*$ is a Jordan $^*$-isomorphism between the above JB$^*$-algebras. So, the first equivalence is clear. The second one has been commented before.
\end{proof}

Our next corollary relies on the following description of orthogonality preserving operators between C$^*$-algebras obtained in \cite{BurFerGarMarPe} (see also \cite{BurFerGarPe}).

\begin{theorem}\label{thm BurFerGarMarPe}\cite[Theorem 17]{BurFerGarMarPe}, \cite[Theorem 4.1 and Corollary 4.2]{BurFerGarPe} Let $T$ be an operator from a C$^*$-algebra $A$ into another C$^*$-algebra $B$ the following are equivalent: \begin{enumerate}[{\rm $a)$}]
\item $T$ is orthogonality preserving (on $A_{sa}$). \item There exits a unital Jordan $^*$-homomorphism $J: M(A) \to B_{2}^{**} (r(h))$
such that $J(x)$ and $h=T^{**} (1)$ operator commute and $$T(x) =
h\bullet_{_{r(h)}} J(x), \hbox{ for every $x\in A$},$$ where $M(A)$ is the multiplier algebra of $A$, $r(h)$ is the range partial isometry of $h$ in $B^{**}$, $B_{2}^{**} (r(h)) = r(h) r(h)^* B^{**} r(h)^* r(h)$ and $\bullet_{_{r(h)}}$ is the natural product making $B_{2}^{**} (r(h))$ a JB$^*$-algebra.
\end{enumerate} Furthermore, when $T$ is symmetric, $h$ is hermitian and hence $r(h)$ decomposes as orthogonal sum of two projections in $B^{**}$.$\hfill\Box$
\end{theorem}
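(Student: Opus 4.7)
The non-trivial implication is $(a)\Rightarrow(b)$; the converse is immediate since $h\bullet_{_{r(h)}} J(\cdot)$ manifestly preserves orthogonality on $A_{sa}$ whenever $J$ is a Jordan $^*$-homomorphism whose image operator commutes with $h$. By Lemma \ref{l multiplier} applied to the pair $(T,T)$, which is orthogonality preserving on $A_{sa}$ exactly when $T$ is, I may assume $A$ is unital and identify $h=T(1)$. Applying Proposition \ref{p symmetric OP pairs} to $(S,T)=(T,T)$ then yields, for every $a\in A$, the identities
$$T(a)T(a^*)^*=T(a^2)h^*=hT((a^2)^*)^*, \qquad T(a)h^*=hT(a^*)^*, \qquad h^*T(a)=T(a^*)^*h.$$

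Write $r:=r(h)$. Combining the two linear identities $T(a)h^*=hT(a^*)^*$ and $h^*T(a)=T(a^*)^*h$ with the polar decomposition $h=r|h|$ (so $r^*r=s(|h|)$ and $rr^*=s(|h^*|)$) and the strong$^*$-convergence $|h|^{1/3^n}\to s(|h|)$ forces $T(a)=rr^*T(a)r^*r$ for every $a\in A$. Hence $T(A)\subseteq B_2^{**}(r)=rr^*B^{**}r^*r$, which is a JB$^*$-algebra with unit $r$ under $(\bullet_{_{r}}, \sharp_{_{r}})$. In this JB$^*$-algebra the element $h$ has range partial isometry $r$, and is therefore Jordan-invertible; let $h^{\sharp_{_{r}}}$ denote its Jordan inverse and set
$$J(a) := h^{\sharp_{_{r}}}\bullet_{_{r}} T(a), \qquad a\in A.$$
Then $J(1)=h^{\sharp_{_{r}}}\bullet_{_{r}} h=r$; transporting $T(a)h^*=hT(a^*)^*$ through the Jordan $^*$-isomorphism $R_{r^*}:B_2^{**}(r)\to B_2^{**}(rr^*)$ of Lemma \ref{l operator comm Peirce 2} shows, after left-multiplication by $(hr^*)^{-1}$, that $J(a)r^*$ commutes with $hr^*$ in the associative algebra $B_2^{**}(rr^*)$, i.e.\ $J(a)$ and $h$ operator commute in $B_2^{**}(r)$.

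The step I expect to be the principal obstacle is verifying that $J$ is a Jordan $^*$-homomorphism. For this I exploit the quadratic identity $T(a^2)h^*=T(a)T(a^*)^*$: transporting it through $R_{r^*}$ puts the identity inside the associative C$^*$-algebra $B_2^{**}(rr^*)$ in which $hr^*$ is a positive invertible element, and left-multiplication by $(hr^*)^{-1}$ yields, for $a\in A_{sa}$, the Jordan-square identity $J(a^2)=J(a)\bullet_{_{r}} J(a)$; polarisation and $\mathbb{C}$-linearisation then give Jordan multiplicativity on all of $A$. The $^*$-compatibility $J(a^*)=J(a)^{\sharp_{_{r}}}$ is obtained symmetrically from $h^*T(a)=T(a^*)^*h$, and Lemma \ref{c OP pairs Jordan *-homomorphisms} serves as the algebraic prototype for the whole computation. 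Finally, the factorisation $T(a)=h\bullet_{_{r}} J(a)$ is immediate from the definition of $J$ together with the fact, already established, that $T(a)$ operator commutes with $h$ inside the JB$^*$-algebra $B_2^{**}(r)$; the last assertion concerning symmetric $T$ reduces to the observation that $h=T(1)$ is hermitian whenever $T(A_{sa})\subseteq B_{sa}$, so $r(h)$ splits as the sum of the support projections of the positive and negative parts of $h$.
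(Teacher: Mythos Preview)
The paper does not prove this theorem at all: it is quoted verbatim from \cite[Theorem 17]{BurFerGarMarPe} and \cite[Theorem 4.1, Corollary 4.2]{BurFerGarPe} and closed with a $\Box$. So there is no ``paper's own proof'' to compare against, and the question becomes whether your argument stands on its own.

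It does not. The decisive gap is the sentence ``In this JB$^*$-algebra the element $h$ has range partial isometry $r$, and is therefore Jordan-invertible.'' The implication is false. Under the Jordan $^*$-isomorphism $R_{r^*}:B_2^{**}(r)\to rr^*B^{**}rr^*$ of Lemma \ref{l operator comm Peirce 2}, the element $h$ is sent to the positive element $hr^*=r|h|r^*$ of the von Neumann algebra $rr^*B^{**}rr^*$, and this element has support projection $rr^*$; but full support in a von Neumann algebra does not imply invertibility (think of the function $t\mapsto t$ in $L^\infty[0,1]$, or any injective positive compact operator in $B(H)$). Consequently $h^{\sharp_r}$ need not exist, your definition $J(a):=h^{\sharp_r}\bullet_r T(a)$ is not available, and every subsequent step (operator commutation of $J(a)$ with $h$, Jordan multiplicativity, the factorisation $T=h\bullet_r J$) collapses with it. The earlier step, that $T(A)\subseteq B_2^{**}(r)$, can indeed be extracted from Proposition \ref{p symmetric OP pairs}: for $a\in A_{sa}$ the identity $T(a)^*T(a)=h^*T(a^2)$ gives $(1-r^*r)T(a)^*T(a)=0$, hence $T(a)(1-r^*r)=0$, and symmetrically $(1-rr^*)T(a)=0$; but this does not rescue the construction of $J$.

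The proofs in the cited references avoid exactly this obstruction: rather than inverting $h$, they exploit the triple-product structure and show that $T$ is a triple homomorphism multiplied by a suitable element, from which the Jordan $^*$-homomorphism $J$ is recovered via the Peirce decomposition without ever dividing by $h$. If you want to repair your approach you would need either a limiting procedure (approximate $J$ using spectral cut-offs of $|h|$ and control the limit) or to pass through the triple-homomorphism characterisation, which is essentially what \cite{BurFerGarMarPe,BurFerGarPe} do.
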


Our next result gives a new perspective for the study of orthogonality preserving (pairs of) operators between C$^*$-algebras.

\begin{proposition}\label{c pairs OP and OP operators}
Let $A$ and $B$ be C$^*$-algebras. Let $S,T: A\to B$ be operators and let $h = S^{**} (1)$ and $k= T^{**} (1)$. Then the following statements hold:\begin{enumerate}
[$(a)$] \item The operator $S$ is orthogonality preserving if and only if there exit two Jordan $^*$-homomorphisms $\Phi,\widetilde{\Phi}: M(A) \to B^{**}$ satisfying $\Phi (1) = r(h) r(h)^*$, $\widetilde{\Phi} (1) = r(h)^* r(h),$ and $S(a) = \Phi (a) h = h \widetilde{\Phi} (a),$ for every $a\in A$.
\item $S,T$ and $(S,T)$ are orthogonality preserving on $A_{sa}$ if and only if the following statements hold:
\begin{enumerate}[$(b1)$]
\item There exit Jordan $^*$-homomorphisms $\Phi_1,\widetilde{\Phi}_1, \Phi_2,\widetilde{\Phi}_2: M(A) \to B^{**}$ satisfying $\Phi_1 (1) = r(h) r(h)^*$, $\widetilde{\Phi}_1 (1) = r(h)^* r(h),$ $\Phi_2 (1) = r(k) r(k)^*$, $\widetilde{\Phi}_2 (1) = r(k)^* r(k),$ $S(a) = \Phi_1 (a) h = h \widetilde{\Phi}_1 (a),$ and $T(a) = \Phi_2 (a) k = k \widetilde{\Phi}_2 (a),$ for every $a\in A$;
\item The pairs $( {\Phi}_1 ,  {\Phi}_2)$ and $(\widetilde{\Phi}_1, \widetilde{\Phi}_2)$ are orthogonality preserving on $A_{sa}$.
\end{enumerate}
\end{enumerate}
\end{proposition}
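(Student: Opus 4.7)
The plan is to prove (a) by invoking Theorem \ref{thm BurFerGarMarPe} and translating its twisted JB$^*$-structure into an ordinary C$^*$-structure, and then derive (b) by applying (a) separately to $S$ and $T$ and relating the resulting Jordan $^*$-homomorphisms through the polar decompositions of $h$ and $k$.

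For (a) ``only if'', Theorem \ref{thm BurFerGarMarPe} produces a unital Jordan $^*$-homomorphism $J : M(A) \to B_2^{**}(u)$ (with $u:= r(h)$, twisted product $\bullet_u$, and involution $\sharp_u$) satisfying $S(x) = h\bullet_u J(x)$ and the operator commutativity of $h$ and $J(x)$ in that JB$^*$-algebra. I would define
$$\Phi(a) := J(a)\, u^*, \qquad \widetilde{\Phi}(a) := u^*\, J(a).$$
The Jordan $^*$-isomorphism $R_{u^*} : B_2^{**}(u) \to B_2^{**}(uu^*)$ furnished by Lemma \ref{l operator comm Peirce 2} (and its left-multiplication analogue) turns $\Phi$ and $\widetilde{\Phi}$ into honest Jordan $^*$-homomorphisms $M(A) \to B^{**}$ with the prescribed unit images. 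The factorization $S(a) = \Phi(a) h$ on $a \in A_{sa}$ hinges on observing that both $h$ and $J(a)$ are hermitian in $(B_2^{**}(u), \bullet_u, \sharp_u)$—for $h$, $uh^*u = u|h|u^*u = h$; for $J(a)$, $J(a^*) = J(a)^{\sharp_u}$—so $hu^*$ and $J(a)u^*$ are self-adjoint elements of the C$^*$-algebra $B_2^{**}(uu^*)$. Lemma \ref{l operator comm Peirce 2} then upgrades their JB$^*$-operator commutativity to ordinary commutativity $hu^* J(a) u^* = J(a) u^* hu^*$, so multiplying by $u$ on the right yields $hu^* J(a) = J(a) u^* h$, whence $S(a) = \tfrac{1}{2}(hu^* J(a) + J(a) u^* h) = \Phi(a) h$; the mirror identity $S(a) = h\widetilde{\Phi}(a)$ is obtained symmetrically, and both extend to $M(A)$ by linearity.

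The ``if'' directions of both (a) and (b) are then direct verifications, using the standard fact that Jordan $^*$-homomorphisms on C$^*$-algebras preserve orthogonality. Taking adjoints in $S(a) = \Phi(a) h = h\widetilde{\Phi}(a)$ yields the intertwiners $h\widetilde{\Phi}(b^*) = \Phi(b^*) h$ and $h^*\Phi(a) = \widetilde{\Phi}(a) h^*$, producing $S(a) S(b)^* = \Phi(a)\Phi(b^*) hh^* = 0$ and $S(b)^* S(a) = \widetilde{\Phi}(b^*)\widetilde{\Phi}(a) h^*h = 0$ whenever $a \perp b$. For (b) ``if'', given $a \perp b$ in $A_{sa}$, the pair $(\Phi_1,\Phi_2)$ being orthogonality preserving yields $T(b)^* S(a) = k^* \Phi_2(b) \Phi_1(a) h = 0$ (noting $\Phi_2(b)^* = \Phi_2(b)$ for $b\in A_{sa}$), while the intertwiner $k^*\Phi_2(b) = \widetilde{\Phi}_2(b) k^*$ coming from $T(b) = \Phi_2(b) k = k \widetilde{\Phi}_2(b)$, combined with the orthogonality preservation of $(\widetilde{\Phi}_1, \widetilde{\Phi}_2)$, gives $S(a) T(b)^* = h\widetilde{\Phi}_1(a)\widetilde{\Phi}_2(b) k^* = 0$.

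For the converse direction of (b), after applying (a) to $S$ and $T$ to obtain (b1), one extracts (b2) as follows. Starting from $0 = T(b)^* S(a) = k^*\Phi_2(b)\Phi_1(a) h$ and using the polar decompositions $h = u|h|$, $k = v|k|$,
$$|k|\,\bigl(v^* \Phi_2(b)\Phi_1(a) u\bigr)\,|h| = 0.$$
Setting $X := v^* \Phi_2(b)\Phi_1(a) u$, iterating $|k| X |h| = 0$ gives $|k|^n X |h|^m = 0$ for all $n,m \geq 1$, so $p(|k|)\, X\, q(|h|) = 0$ for every polynomial $p, q$ vanishing at $0$; approximating the spectral projections $s(|k|) = v^*v$ and $s(|h|) = u^*u$ by such polynomials in the strong$^*$-topology of $B^{**}$ yields $v^*v\, X\, u^*u = 0$, and since $X$ already lives in $v^*v B^{**} u^*u$ this means $X = 0$. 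Multiplying on the left by $v$ and on the right by $u^*$, together with $\Phi_1(a) = \Phi_1(a) uu^*$ and $\Phi_2(b) = vv^*\Phi_2(b)$, recovers $\Phi_2(b) \Phi_1(a) = 0$, and the analogous computation starting from $S(a) T(b)^* = 0$ yields $\widetilde{\Phi}_1(a)\widetilde{\Phi}_2(b) = 0$. The main obstacle lies precisely in this spectral ``peeling off'' of the singular values $|h|$ and $|k|$: verifying that the sandwiched vanishing $|k| X |h| = 0$ in $B^{**}$ forces $s(|k|) X s(|h|) = 0$ through the orthogonal block structure induced by the spectral projections of $|h|$ and $|k|$.
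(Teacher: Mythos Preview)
Your proof is correct and follows essentially the same route as the paper's. Part~(a) is handled identically: invoke Theorem~\ref{thm BurFerGarMarPe}, use Lemma~\ref{l operator comm Peirce 2} to convert operator-commutativity in the twisted JB$^*$-algebra into ordinary commutativity of $hr(h)^*$ and $J(a)r(h)^*$, and set $\Phi(a)=J(a)\,r(h)^*$, $\widetilde{\Phi}(a)=r(h)^*J(a)$. The one minor difference is in the cancellation step for~(b2): the paper peels off $h$ and $k^*$ from $h\,\widetilde{\Phi}_1(a)\widetilde{\Phi}_2(b)^*\,k^*=0$ by invoking directly the support-projection property recorded just before Lemma~\ref{l multiplier} (namely, if $b=r(a)r(a)^*b$ and $a^*b=0$ then $b=0$), whereas you rederive that same fact via polynomial approximation of $s(|h|)$ and $s(|k|)$ in the strong$^*$ topology. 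Both are correct; the paper's version is just a bit more economical.
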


\begin{proof} The ``if'' implications are clear in both statements. We shall only prove the ``only if'' implication.\smallskip

$(a)$. By Theorem \ref{thm BurFerGarMarPe}, there exits a unital Jordan $^*$-homomorphism $J_1: M(A) \to B_{2}^{**} (r(h))$ such that $J_1(x)$ and $h$ operator commute in the JB$^*$-algebra $(B_{2}^{**} (r(h)), \bullet_{_{r(h)}})$ and $$S(x) = h\bullet_{_{r(a)}} J_1(a) \hbox{ for every $a\in A$}.$$

Fix $a\in A_{sa}$. Since $h$ and $J_1 (a)$ are hermitian elements in $(B_{2}^{**} (r(h)), \bullet_{_{r(h)}})$ which operator commute, Lemma \ref{l operator comm Peirce 2} assures that $h r(h)^*$ and $J_1(a) r(h)^*$ commute in the usual sense of $B^{**}$, that is, $$h r(h)^* J_1(a) r(h)^* = J_1(a) r(h)^*h r(h)^*,$$ or equivalently, $$h r(h)^* J_1(a) = J_1(a) r(h)^*h.$$ Consequently, we have $$S(a) = h \bullet_{_{r(h)}} J_1 (a) = h r(h)^* J_1 (a) = J_1 (a) r(h)^* h,$$ for every $a\in A$. The desired statement follows by considering $\Phi_1 (a) = J_1 (a) r(h)^*$ and $\widetilde{\Phi}_1 (a) =r(h)^* J_1 (a).$\smallskip

$(b)$ The statement in $(b1)$ follows from $(a)$. We shall prove $(b2)$.\smallskip

By hypothesis, given $a,b$ in $A_{sa}$ with $a\perp b$, we have $$0= S(a) T(b)^*= \left(h \widetilde{\Phi}_1 (a) \right) \left(k \widetilde{\Phi}_2 (b)\right)^*$$ $$=  h \widetilde{\Phi}_1 (a) \widetilde{\Phi}_2 (b)^* k^* $$ Having in mind that $\widetilde{\Phi}_1 (A)\subseteq r(h)^* r(h) B^{**}$ and $\widetilde{\Phi}_2 (A)\subseteq  B^{**} r(k)^* r(k)$, we deduce that $\widetilde{\Phi}_1 (a) \widetilde{\Phi}_2 (b)^* =0$ (compare the comments before Lemma \ref{l multiplier}), as we desired. In a similar fashion we prove $\widetilde{\Phi}_2 (b)^* \widetilde{\Phi}_1 (a)=0$, ${\Phi}_2 (b)^* {\Phi}_1 (a)=0 = {\Phi}_1 (a)  {\Phi}_2 (b)^*.$
\end{proof}

\section{Holomorphic mappings valued in a commutative C$^*$-algebra}

The particular setting in which a holomorphic function is valued in a commutative C$^*$-algebra provides enough advantages to establish a full description of the orthogonally additive, orthogonality preserving, holomorphic mappings which are valued in a commutatively C$^*$-algebra.

\begin{proposition}\label{p 1 comm} Let $S,T: A \to B$ be operators between C$^*$-algebras with $B$ commutative. Suppose that $S$, $T$ and $(S,T)$ are orthogonality preserving, and let us denote $h = S^{**} (1)$ and $k= T^{**} (1)$. Then there exits a Jordan $^*$-homomorphism $\Phi: M(A) \to B^{**}$ satisfying $\Phi (1) = r(|h| +|k|)$, $S(a) = \Phi (a) h,$ and $T(a) = \Phi (a) k,$ for every $a\in A$.
\end{proposition}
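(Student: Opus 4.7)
The plan is to build $\Phi$ by gluing the Jordan $^*$-homomorphisms that Proposition \ref{c pairs OP and OP operators} already produces for $S$ and $T$ separately. That result furnishes Jordan $^*$-homomorphisms $\Phi_1,\widetilde{\Phi}_1,\Phi_2,\widetilde{\Phi}_2:M(A)\to B^{**}$ with $S(a)=\Phi_1(a)h=h\widetilde{\Phi}_1(a)$, $T(a)=\Phi_2(a)k=k\widetilde{\Phi}_2(a)$, such that the pairs $(\Phi_1,\Phi_2)$ and $(\widetilde{\Phi}_1,\widetilde{\Phi}_2)$ are orthogonality preserving on $A_{sa}$. Write $p:=s(|h|)$ and $q:=s(|k|)$; since $B^{**}$ is commutative, $p=r(h)r(h)^{*}=r(h)^{*}r(h)$ and similarly for $q$. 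Commutativity also collapses the ``left'' and ``right'' factorisations: from $\Phi_1(a)h=h\widetilde{\Phi}_1(a)=\widetilde{\Phi}_1(a)h$ one gets $(\Phi_1(a)-\widetilde{\Phi}_1(a))|h|^{2}=0$, hence $(\Phi_1(a)-\widetilde{\Phi}_1(a))p=0$; because both images lie in $pB^{**}$ (absorbed by $p=\Phi_1(1)=\widetilde{\Phi}_1(1)$), this forces $\Phi_1=\widetilde{\Phi}_1$, and likewise $\Phi_2=\widetilde{\Phi}_2$.

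Next I would extract the compatibility of $\Phi_1$ and $\Phi_2$ on their common support. Proposition \ref{p symmetric OP pairs} applied to $(S,T)$ gives $S(a)k^{*}=hT(a^{*})^{*}$ on $M(A)_{sa}$. Substituting the factorisations (and using commutativity together with $\Phi_2(a)^{*}=\Phi_2(a)$ for $a=a^{*}$) this becomes $(\Phi_1(a)-\Phi_2(a))\,hk^{*}=0$. In commutative $B^{**}$ one has $|hk^{*}|^{2}=kh^{*}hk^{*}=|h|^{2}|k|^{2}$, so multiplying by $kh^{*}$ yields $(\Phi_1(a)-\Phi_2(a))|h|^{2}|k|^{2}=0$; since $|h|^{2}|k|^{2}$ has support $pq$, this gives $(\Phi_1(a)-\Phi_2(a))pq=0$. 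Combined with $\Phi_1(a)=p\Phi_1(a)$ and $\Phi_2(a)=q\Phi_2(a)$, this is the key identity $\Phi_1(a)\,q=p\,\Phi_2(a)$, which extends to all of $M(A)$ by linearity.

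With this in hand I define $\Phi(a):=\Phi_1(a)+(1-p)\Phi_2(a)$. It is linear and, by commutativity of $B^{**}$, $*$-preserving. Three identities then fall out immediately. First, $\Phi(1)=p+(1-p)q=p\vee q$, which equals $s(|h|+|k|)=r(|h|+|k|)$ because $|h|,|k|\ge 0$. Second, $\Phi(a)h=\Phi_1(a)h+(1-p)\Phi_2(a)h=S(a)$, since $(1-p)h=0$. Third, using $k=qk$ and the key identity, $\Phi(a)k=\Phi_1(a)qk+(1-p)\Phi_2(a)k=p\Phi_2(a)k+(1-p)\Phi_2(a)k=\Phi_2(a)k=T(a)$.

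It remains to verify Jordan multiplicativity, which by polarisation reduces to $\Phi(a^{2})=\Phi(a)^{2}$ for $a\in M(A)_{sa}$. Expanding in commutative $B^{**}$,
$$\Phi(a)^{2}=\Phi_1(a)^{2}+2(1-p)\Phi_1(a)\Phi_2(a)+(1-p)\Phi_2(a)^{2};$$
the cross term vanishes because $\Phi_1(a)=p\Phi_1(a)$ forces $(1-p)\Phi_1(a)=0$, and the surviving terms equal $\Phi_1(a^{2})+(1-p)\Phi_2(a^{2})=\Phi(a^{2})$. The only delicate step I anticipate is the passage from $X\,hk^{*}=0$ to $X\,pq=0$ in the derivation of the key identity; this is precisely the place where commutativity of $B^{**}$ is indispensable, and it is what makes the two separate ``centres'' $p$ and $q$ mesh into a single Jordan $^*$-homomorphism supported on $p\vee q$.
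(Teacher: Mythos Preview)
Your proof is correct and follows essentially the same route as the paper: both obtain the factorisations $\Phi_1,\Phi_2$ from Proposition~\ref{c pairs OP and OP operators}, establish the compatibility identity $\Phi_1(a)q=p\Phi_2(a)$, and then glue them into $\Phi(a)=\Phi_1(a)+(1-p)\Phi_2(a)$ (which is exactly the paper's $pq\Phi_1(a)+p(1-q)\Phi_1(a)+q(1-p)\Phi_2(a)$ after simplification). The only cosmetic difference is that the paper obtains the key identity by invoking Lemma~\ref{c OP pairs Jordan *-homomorphisms} directly on the orthogonality-preserving pair $(\Phi_1,\Phi_2)$, whereas you rederive it from Proposition~\ref{p symmetric OP pairs} applied to $(S,T)$; and you spell out the Jordan-multiplicativity check that the paper leaves to the reader.
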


\begin{proof} Let $\Phi_1,\widetilde{\Phi}_1, \Phi_2,\widetilde{\Phi}_2: M(A) \to B^{**}$ be the Jordan $^*$-homomorphisms satisfying $(b1)$ and $(b2)$ in Proposition \ref{c pairs OP and OP operators}. By hypothesis, $B$ is commutative, and hence $\Phi_i=\widetilde{\Phi}_i$ for every $i=1,2$ (compare the proof of Proposition \ref{c pairs OP and OP operators}). Since the pair $( {\Phi}_1 ,  {\Phi}_2)$ is orthogonality preserving on $A_{sa}$, Lemma \ref{c OP pairs Jordan *-homomorphisms} assures that $$ {\Phi}_1^{**} (1) {\Phi}_2 (a) = {\Phi}_1 (a) {\Phi}_2^{**} (1),$$ for every $a\in A_{sa}$. In order to simplify notation, let us denote $p= {\Phi}_1^{**} (1)$ and $q = {\Phi}_2^{**} (1)$.\smallskip

We define an operator ${\Phi}: M(A) \to B^{**}$, defined by $$\Phi (a) = pq \Phi_1 (a) + p(1-q) \Phi_1 (a) + q(1-p) \Phi_2 (a).$$ Since $p {\Phi}_2 (a) = {\Phi}_1 (a) q$, it can be easily checked that $\Phi$ is a Jordan $^*$-homomorphism such that $S(a) = \Phi (a) h,$ and $T(a) = \Phi (a) k,$ for every $a\in A$.
\end{proof}

\begin{theorem}\label{thm OP + OA Holom commutative} Let  $f:B_A (0,\varrho) \longrightarrow B$ be a holomorphic mapping, where $A$ and $B$ are C$^*$-algebras with $B$ commutative, and let $\displaystyle f=
\sum_{k=0}^\infty P_k$ be its Taylor series at zero, which is uniformly converging in $U=B_A (0,\delta)$. Suppose $f$ is orthogonality preserving on $A_{sa}\cap U$ and orthogonally additive {\rm(}equivalently, orthogonally additive and zero products preserving{\rm)}. Then there exist a sequence $(h_n)$ in $B^{**}$ and a Jordan $^*$-homomorphism $\Phi : M(A) \to B^{**}$ such that $$ f(x) = \sum_{n=1}^\infty h_n \Phi (a^n)= \sum_{n=1}^\infty h_n \Phi (a^n),$$ uniformly in $a\in U$.
\end{theorem}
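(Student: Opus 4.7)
The plan is to reduce to the setting of Corollary \ref{c OP + OA Holom} and then exploit the commutativity of $B^{**}$ to glue the family of Jordan $^*$-homomorphisms attached to the ``coefficient'' operators into a single one.

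First I would apply Corollary \ref{c OP + OA Holom} to obtain operators $T_n:A\to B$ with $f(x)=\sum_{n\ge 1}T_n(x^n)$ uniformly on $U$, where each $T_n$ and each pair $(T_n,T_m)$ is orthogonality preserving on $A_{sa}$. Writing $h_n:=T_n^{**}(1)\in B^{**}$ and $p_n:=r(h_n)r(h_n)^*$ (the usual support projection of $h_n$ in the commutative von Neumann algebra $B^{**}$), Proposition \ref{c pairs OP and OP operators}(a) furnishes, for each $n$, a Jordan $^*$-homomorphism $\Phi_n:M(A)\to B^{**}$ with $\Phi_n(1)=p_n$ and $T_n(a)=h_n\Phi_n(a)$ for every $a\in M(A)$ (note that $\widetilde{\Phi}_n=\Phi_n$ in the commutative case). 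Applying Proposition \ref{c pairs OP and OP operators}(b) to each pair $(T_n,T_m)$, combined with Lemma \ref{c OP pairs Jordan *-homomorphisms}(c) and the commutativity of $B^{**}$, yields the key compatibility identity
\[
p_n\Phi_m(a)=p_m\Phi_n(a),\qquad a\in A_{sa},
\]
which extends to all $a\in M(A)$ by linearity.

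The decisive step, where commutativity is essential and where I expect the main technical difficulty to concentrate, is to glue the family $(\Phi_n)$ into a single Jordan $^*$-homomorphism $\Phi:M(A)\to B^{**}$ satisfying $\Phi(a)h_n=\Phi_n(a)h_n$ for every $n$ and every $a$. I would decompose $P:=\bigvee_n p_n$ into pairwise orthogonal pieces by setting $q_1:=p_1$ and $q_n:=p_n(1-p_1\vee\cdots\vee p_{n-1})$ for $n\ge 2$, so that $q_n\le p_n$, $q_nq_m=0$ for $n\ne m$, and $\sum_n q_n=P$. Then define
\[
\Phi(a):=\sum_{n=1}^{\infty}q_n\Phi_n(a),
\]
with convergence in the weak$^*$ topology of $B^{**}$ and $\|\Phi(a)\|\le\|a\|$, since the $q_n$ are pairwise orthogonal projections in the commutative W$^*$-algebra $B^{**}$ and each $\Phi_n$ is contractive. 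Linearity and $^*$-preservation are transparent, while Jordan multiplicativity is immediate from $q_nq_m=0$ for $n\ne m$ together with the Jordan multiplicativity of each $\Phi_n$.

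To close, the compatibility identity together with $q_m\le p_m$ and $h_n=p_nh_n$ yields
\[
\Phi(a)h_n=\sum_m q_m\Phi_m(a)p_nh_n=\sum_m q_mp_m\Phi_n(a)h_n=\sum_m q_m\Phi_n(a)h_n=P\Phi_n(a)h_n=\Phi_n(a)h_n,
\]
where the last equality uses $\Phi_n(a)=p_n\Phi_n(a)$ and $p_n\le P$. Hence $T_n(a)=h_n\Phi(a)$ for every $a\in M(A)$; substituting $a=x^n$ and summing gives $f(x)=\sum_{n\ge 1}h_n\Phi(x^n)$ uniformly on $U$, as required.
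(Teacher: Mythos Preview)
Your argument is correct, and it differs from the paper's in a pleasant way. The paper proceeds inductively: using Proposition~\ref{p 1 comm} it builds, for each $n$, a single Jordan $^*$-homomorphism $\Psi_n:M(A)\to B^{**}$ with $T_k(a)=h_k\Psi_n(a)$ for all $k\le n$, and then obtains $\Phi$ as a weak$^*$ limit of the $\Psi_n$ along a free ultrafilter on $\mathbb{N}$. You instead extract the compatibility relation $p_n\Phi_m(a)=p_m\Phi_n(a)$ directly from Proposition~\ref{c pairs OP and OP operators}$(b)$ and Lemma~\ref{c OP pairs Jordan *-homomorphisms}, and glue the $\Phi_n$ by an explicit orthogonal decomposition $q_n=p_n(1-p_1\vee\cdots\vee p_{n-1})$ of $\bigvee_n p_n$, defining $\Phi(a)=\sum_n q_n\Phi_n(a)$. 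Your route avoids both the inductive appeal to Proposition~\ref{p 1 comm} and the ultrafilter, and yields a concrete formula for $\Phi$; the paper's route is more modular in that each inductive step repackages the previous ones into a single operator before proceeding. One small remark: Proposition~\ref{c pairs OP and OP operators}$(a)$ only asserts $T_n(a)=h_n\Phi_n(a)$ for $a\in A$, not for all of $M(A)$; this is harmless here since you only use the identity with $a=x^n$ for $x\in U\subseteq A$.
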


\begin{proof} By Corollary \ref{c OP + OA Holom}, there exists a sequence $(T_n)$ of operators from $A$ into $B$ satisfying that the pair $(T_n,T_m)$ is orthogonality preserving on $A_{sa}$ {\rm(}equivalently, zero products preserving on $A_{sa}${\rm)} for every $n,m\in \mathbb{N}$ and $$ f(x) = \sum_{n=1}^\infty T_n (x^n),$$ uniformly in $x\in U$. Denote $h_n= T_n^{**} (1)$.\smallskip

We shall prove now the existence of the Jordan $^*$-homomorphism $\Phi$. We prove, by induction, that for each natural $n$, there exists a Jordan $^*$-homomorphism $\Psi_n : M(A) \to B^{**}$ such that $r(\Psi_n (1))= r(|h_1|+\ldots+|h_n|)$ and $T_k (a) = h_k \Psi_n (a)$ for every $k\leq n$, $a\in A$. The statement for $n=1$ follows from Corollary \ref{c OP + OA Holom} and Proposition \ref{c pairs OP and OP operators}. Let us assume that our statement is true for $n$. Since for every $k,m$ in $\mathbb{N}$, $T_k$, $T_m$ and the pair $(T_k,T_m)$ are orthogonality preserving, we can easily check that $T_{n+1}$, $T_1 +\ldots+ T_n$ and $(T_{n+1},T_1 +\ldots+ T_n)= (T_{n+1}, (h_1 +\ldots+ h_n) \Psi_n)$ are  orthogonality preserving. By Proposition \ref{p 1 comm}, there exists a Jordan $^*$-homomorphism $\Psi_{n+1}: M(A) \to B^{**}$ satisfying $r(\Psi_{n+1} (1)) = r (|h_1|+\ldots+|h_n|+|h_{n+1}|)$, $T_{n+1} (a) = h_{n+1} \Psi_{n+1} (a^{n+1})$ and $(T_1 +\ldots+ T_n)(a) = (h_1 +\ldots +h_n) \Psi_{n+1} (a) $ for every $k\leq n$, $a\in A$. Since for each, $1\leq k\leq n$, $$h_k \Psi_{n+1} (a) = h_{k} r (|h_1|+\ldots+|h_n|+|h_{n+1}|) \Psi_{n+1} (a) $$ $$= h_k (|h_1|+\ldots+|h_n|) \Psi_{n+1} (a)$$ $$= h_k (|h_1|+\ldots+|h_n|) \Psi_{n} (a)= h_k \Psi_{n} = T_k (a),$$ for every $a\in A$, as desired.\smallskip

Let us consider a free ultrafilter $\mathcal{U}$ on $\NN$. By the Banach-Alaoglu theorem, any bounded set in $B^{**}$ is relatively weak$^*$-compact, and thus the assignment $a\mapsto \Phi(a) := w^*-\lim_{\mathcal{U}} \Psi_{n} (a)$ defines a Jordan $^*$-homomorphism from $M(A)$ into $B^{**}$. If we fix a natural $k$, we know that $T_k (a) = h_k  \Psi_n (a)$, for every $n\geq k$ and $a\in A$. Then it can be easily checked that $T_k (a) = h_k \Phi (a),$ for every $a\in A$, which concludes the proof.
\end{proof}

The Banach-Stone type theorem for orthogonally additive, orthogonality preserving, holomorphic mappings between commutative C$^*$-algebras, established in Theorem \ref{t BuShuWong} (see \cite[Theorem 3.4]{BuHsuWong2013}) is a direct consequence of our previous result.


\section{Banach-Stone type theorems for holomorphic mappings between general C$^*$-algebras}

In this section we deal with holomorphic functions between general C$^*$-algebras. In this more general setting we shall require additional hypothesis to establish a result in the line of the above Theorem \ref{thm OP + OA Holom commutative}.\smallskip

Given a unital C$^*$-algebra $A$, the symbol inv$(A)$ will denote the set of invertible elements in $A$. The next lemma is a technical tool which is needed later. The proof is left to the reader and follows easily from the fact that inv$(A)$ is an open subset of $A$.

\begin{lemma}\label{l holom with invertible in its image} Let  $f:B_A (0,\varrho) \longrightarrow B$ be a holomorphic mapping, where $A$ and $B$ are C$^*$-algebras with $B$ unital, and let $\displaystyle f=
\sum_{k=0}^\infty P_k$ be its Taylor series at zero, which is uniformly converging in $U=B_A (0,\delta)$. Let us assume that there exists $a_0\in U$ with $f(a_0)\in \hbox{inv} (B)$. Then there exists $m_0\in \mathbb{N}$ such that $\displaystyle\sum_{k=0}^{m_0} P_k (a_0)\in \hbox{inv} (B)$.$\hfill\Box$
\end{lemma}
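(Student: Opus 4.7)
The claim to be proved is essentially a continuity argument, and the author hints that it follows from the openness of $\hbox{inv}(B)$, so the proof plan is short and direct.

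The plan is to exploit two facts: (i) the set $\hbox{inv}(B)$ of invertible elements in the unital C$^*$-algebra $B$ is open in the norm topology, and (ii) the hypothesis that the Taylor series $\sum_{k=0}^\infty P_k$ converges to $f$ uniformly on $U=B_A(0,\delta)$ implies, in particular, that the partial sums $S_m(a_0):=\sum_{k=0}^{m} P_k(a_0)$ converge in norm to $f(a_0)$ for the fixed point $a_0 \in U$. Indeed, uniform convergence on $U$ certainly gives pointwise convergence at $a_0$.

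First I would invoke the openness of $\hbox{inv}(B)$: since $B$ is a unital Banach algebra, there exists $\varepsilon>0$ such that the open ball $B_B(f(a_0),\varepsilon)$ is contained in $\hbox{inv}(B)$ (this is the standard Neumann-series argument, namely if $\|y - f(a_0)\| < \|f(a_0)^{-1}\|^{-1}$ then $y$ is invertible). Next, using the pointwise convergence $S_m(a_0)\to f(a_0)$, I would choose $m_0\in \mathbb{N}$ large enough so that $\|S_{m_0}(a_0)-f(a_0)\|<\varepsilon$. Then $S_{m_0}(a_0)=\sum_{k=0}^{m_0}P_k(a_0)\in B_B(f(a_0),\varepsilon)\subseteq \hbox{inv}(B)$, which is exactly the desired conclusion.

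There is essentially no obstacle: the only substantive ingredients are the openness of the invertibles in a unital Banach algebra and the elementary fact that uniform convergence on a set implies convergence at every individual point of that set. Thus the proof reduces to one application of each, and no use of the orthogonality/holomorphy structure beyond having the Taylor series converge at $a_0$ is required.
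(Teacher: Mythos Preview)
Your argument is correct and is precisely the approach the paper intends: the authors omit the proof, noting only that it ``follows easily from the fact that $\hbox{inv}(A)$ is an open subset of $A$,'' which is exactly the openness-plus-pointwise-convergence reasoning you give.
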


We can now state a description of those orthogonally additive, orthogonality preserving, holomorphic mappings between C$^*$-algebras whose image contains an invertible element.

\begin{theorem}\label{thm OP + OA Holom general} Let  $f:B_A (0,\varrho) \longrightarrow B$ be a holomorphic mapping, where $A$ and $B$ are C$^*$-algebras with $B$ unital, and let $\displaystyle f=
\sum_{k=0}^\infty P_k$ be its Taylor series at zero, which is uniformly converging in $U=B_A (0,\delta)$.
Suppose $f$ is orthogonality preserving on $A_{sa}\cap U$, orthogonally additive on $U$ and $f(U)\cap \hbox{inv} (B)\neq \emptyset$. Then there exist a sequence $(h_n)$ in $B^{**}$ and Jordan $^*$-homomorphisms $\Theta, \widetilde{\Theta} : M(A) \to B^{**}$ such that $$ f(a) = \sum_{n=1}^\infty h_n \widetilde{\Theta} (a^n)= \sum_{n=1}^\infty {\Theta} (a^n) h_n,$$ uniformly in $a\in U$.
\end{theorem}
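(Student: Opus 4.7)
I would first apply Corollary \ref{c OP + OA Holom} to write $f(a)=\sum_{n=1}^{\infty}T_n(a^n)$ uniformly on $U$, where each $T_n:A\to B$ is a bounded operator such that every pair $(T_n,T_m)$ is orthogonality preserving on $A_{sa}$. Setting $h_n=T_n^{**}(1)$, Proposition \ref{c pairs OP and OP operators} then produces Jordan $^*$-homomorphisms $\Phi_n,\widetilde{\Phi}_n:M(A)\to B^{**}$ with $\Phi_n(1)=q_n:=r(h_n)r(h_n)^*$, $\widetilde{\Phi}_n(1)=p_n:=r(h_n)^*r(h_n)$ and $T_n(a)=\Phi_n(a)h_n=h_n\widetilde{\Phi}_n(a)$ for every $a\in A$. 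Moreover, by Proposition \ref{c pairs OP and OP operators}(b2), the pairs $(\widetilde{\Phi}_n,\widetilde{\Phi}_m)$ and $(\Phi_n,\Phi_m)$ are orthogonality preserving on $A_{sa}$, so Lemma \ref{c OP pairs Jordan *-homomorphisms} yields the commutation relations
\[p_n\widetilde{\Phi}_m(a)=\widetilde{\Phi}_n(a)\,p_m,\qquad q_n\Phi_m(a)=\Phi_n(a)\,q_m\qquad(a\in A,\ n,m\in\mathbb{N}).\]

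The invertibility hypothesis then enters via Lemma \ref{l holom with invertible in its image}, which provides $m_0\in\mathbb{N}$ such that $s_0:=\sum_{k=1}^{m_0}T_k(a_0^k)\in\hbox{inv}(B)$. Each summand $h_k\widetilde{\Phi}_k(a_0^k)$ lies in $B^{**}p_k$, so $s_0\in B^{**}(p_1\vee\cdots\vee p_{m_0})$; the invertibility of $s_0$ in $B$ then forces $p_1\vee\cdots\vee p_{m_0}=1_{B^{**}}$. The symmetric argument using the factorisation $T_k(a)=\Phi_k(a)h_k\in q_k B^{**}$ gives $q_1\vee\cdots\vee q_{m_0}=1_{B^{**}}$.

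I would then construct $\widetilde{\Theta}$ by a finite induction, in the spirit of Theorem \ref{thm OP + OA Holom commutative}, building Jordan $^*$-homomorphisms $\widetilde{\Psi}_n:M(A)\to B^{**}$ with $\widetilde{\Psi}_n(1)=p_1\vee\cdots\vee p_n$ satisfying $h_k\widetilde{\Psi}_n(a)=T_k(a)$ for $1\le k\le n$ and $a\in A$, and such that every pair $(\widetilde{\Psi}_n,\widetilde{\Phi}_k)$ with $k>n$ is still orthogonality preserving on $A_{sa}$. The inductive step splices $\widetilde{\Phi}_{n+1}$ onto $\widetilde{\Psi}_n$ via the Peirce decomposition with respect to the projection $\widetilde{\Psi}_n(1)$, using the commutation relation $p_n\widetilde{\Phi}_m(a)=\widetilde{\Phi}_n(a)p_m$ (and its $^*$-adjoint) to check that the spliced map respects the Jordan product. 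By the previous paragraph $\widetilde{\Psi}_{m_0}^{**}(1)=1$, so for any $n>m_0$ the final clause of Lemma \ref{c OP pairs Jordan *-homomorphisms} applied to the pair $(\widetilde{\Psi}_{m_0},\widetilde{\Phi}_n)$ yields $\widetilde{\Phi}_n(a)=\widetilde{\Psi}_{m_0}(a)\,p_n$, whence $T_n(a)=h_n\widetilde{\Phi}_n(a)=h_n\widetilde{\Psi}_{m_0}(a)$. Setting $\widetilde{\Theta}:=\widetilde{\Psi}_{m_0}$ and running the symmetric construction with the $\Phi_n$'s and $q_n$'s to obtain $\Theta$, one reads off
\[f(a)=\sum_{n=1}^{\infty}h_n\widetilde{\Theta}(a^n)=\sum_{n=1}^{\infty}\Theta(a^n)h_n\]
uniformly on $U$.

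The main obstacle is precisely the inductive splicing in the non-commutative setting. In Proposition \ref{p 1 comm} the analogous gluing formula $pq\Phi_1+p(1-q)\Phi_1+q(1-p)\Phi_2$ is automatically a Jordan $^*$-homomorphism because $p,q$ are central in the commutative $B$; for general $B$ one must verify with care that the corresponding Peirce formula built from $\widetilde{\Phi}_{n+1}$ and $\widetilde{\Psi}_n$ respects the Jordan product, using the above commutation relations to rewrite every mixed product, and that the spliced map preserves pairwise orthogonality with the remaining $\widetilde{\Phi}_k$'s. The role of the invertibility hypothesis is to force termination of this induction after $m_0$ steps at a unital Jordan $^*$-homomorphism, after which the final clause of Lemma \ref{c OP pairs Jordan *-homomorphisms} handles all remaining indices uniformly.
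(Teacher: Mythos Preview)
Your preliminary reductions match the paper exactly: the expansion $f(a)=\sum T_n(a^n)$ via Corollary~\ref{c OP + OA Holom}, the factorisations $T_n(a)=\Phi_n(a)h_n=h_n\widetilde{\Phi}_n(a)$ via Proposition~\ref{c pairs OP and OP operators}, the pairwise orthogonality preservation of $(\widetilde{\Phi}_n,\widetilde{\Phi}_m)$ and $(\Phi_n,\Phi_m)$, and the use of Lemma~\ref{l holom with invertible in its image} to locate $m_0$. Your conclusion that the first $m_0$ support projections are jointly ``full'' also agrees with the paper's claim that $k:=\sum_{j=1}^{m_0} r(h_j)^*r(h_j)$ is invertible in $B^{**}$.

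The gap is precisely where you say it is. Your construction of $\widetilde{\Theta}$ depends on an inductive splicing of $\widetilde{\Phi}_{n+1}$ onto $\widetilde{\Psi}_n$ via a Peirce-type formula, aiming at a Jordan $^*$-homomorphism with unit $p_1\vee\cdots\vee p_n$. But the projections $p_j=r(h_j)^*r(h_j)$ need not commute in $B^{**}$, so neither the supremum $p_1\vee\cdots\vee p_n$ nor the gluing expression from Proposition~\ref{p 1 comm} has an evident meaning here, and the commutation relations $p_n\widetilde{\Phi}_m(a)=\widetilde{\Phi}_n(a)p_m$ by themselves do not visibly force the spliced map to respect the Jordan product. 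You acknowledge this obstacle but do not resolve it; as written, the proposal is a plan with its central step missing.

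The paper sidesteps the obstacle altogether. Rather than splice, it forms the \emph{linear sum} $\Psi:=\sum_{k=1}^{m_0}\widetilde{\Phi}_k$ and observes that $\Psi$ is orthogonality preserving and that each pair $(\Psi,\widetilde{\Phi}_n)$ is orthogonality preserving (both immediate from the pairwise condition). Proposition~\ref{c pairs OP and OP operators}$(b)$ then \emph{manufactures} the desired Jordan $^*$-homomorphism $\widetilde{\Theta}$ automatically, as the one in the factorisation $\Psi(a)=k\,\widetilde{\Theta}(a)$ with $k=\Psi(1)$; the same application yields Jordan $^*$-homomorphisms $\widetilde{\Theta}_n$ with $(\widetilde{\Theta},\widetilde{\Theta}_n)$ orthogonality preserving. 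The invertibility of $k$ makes $\widetilde{\Theta}$ unital, and then the final clause of Lemma~\ref{c OP pairs Jordan *-homomorphisms} gives $\widetilde{\Theta}_n(a)=p_n\widetilde{\Theta}(a)$, whence $\widetilde{\Phi}_n(a)=p_n\widetilde{\Theta}(a)$ and $T_n(a)=h_n\widetilde{\Theta}(a)$ for \emph{every} $n$ at once. No induction, no hand-verification of Jordan multiplicativity---the structure theorem does all the work.
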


\begin{proof} By Corollary \ref{c OP + OA Holom} there exists a sequence $(T_n)$ of operators from $A$ into $B$ satisfying that the pair $(T_n,T_m)$ is orthogonality preserving on $A_{sa}$ for every $n,m\in \mathbb{N}$ and $$ f(x) = \sum_{n=1}^\infty T_n (x^n),$$ uniformly in $x\in U$.\smallskip

Now,  Proposition \ref{c pairs OP and OP operators} $(a)$, applied to $T_n$ ($n\in \mathbb{N}$), implies the existence of sequences $(\Phi_n)$ and $(\widetilde{\Phi}_n)$ of Jordan $^*$-homomorphisms from $M(A)$ into $B^{**}$ satisfying $\Phi_n (1) = r(h_n) r(h_n )^*$, $\widetilde{\Phi}_n (1) = r(h_n)^* r(h_n),$ where $h_n = T_n^{**} (1)$, and $$T_n (a) = \Phi_n (a) h_n = h_n \widetilde{\Phi}_n (a),$$ for every $a\in A$, $n\in \mathbb{N}$. Moreover, from Proposition \ref{c pairs OP and OP operators} $(b)$, the pairs $( {\Phi}_n ,  {\Phi}_m)$ and $(\widetilde{\Phi}_n, \widetilde{\Phi}_m)$ are orthogonality preserving on $A_{sa}$, for every $n,m\in \mathbb{N}$.\smallskip

Since $f(U)\cap \hbox{inv} (B)\neq \emptyset$, it follows from Lemma \ref{l holom with invertible in its image} that there exists a natural $m_0$ and $a_0\in A$ such that $$\displaystyle\sum_{k=1}^{m_0} P_k (a_0) = \sum_{k=1}^{m_0} \Phi_k (a_0^k) h_k = \sum_{k=1}^{m_0} h_k \widetilde{\Phi}_k (a_0^k) \in \hbox{inv} (B).$$

We claim that $r(h_1)^* r(h_1) +\ldots + r(h_{m_0})^* r(h_{m_0})$ is invertible in $B^{+}$ (and in $B^{**}$). Otherwise, we could find a projection $q\in B^{**}$ satisfying $(r(h_1)^* r(h_1) +\ldots + r(h_{m_0})^* r(h_{m_0})) q = 0$. This would imply that $$\left( \sum_{k=1}^{m_0} P_k (a_0)\right) q = \left(\sum_{k=1}^{m_0} \Phi_k (a_0^k) h_k\right) q = 0,$$ contradicting that $\displaystyle\sum_{k=1}^{m_0} P_k (a_0) = \sum_{k=1}^{m_0} \Phi_k (a_0^k) h_k$ is invertible in $B$.\smallskip

Consider now the mapping $\Psi = \sum_{k=1}^{m_0} \widetilde{\Phi}_k$. It is clear that, for each natural $n$, $\Psi$, $\widetilde{\Phi}_n$ and the pair $(\Psi, \widetilde{\Phi}_n)$ are orthogonality preserving. Applying Proposition \ref{c pairs OP and OP operators} $(b)$, we deduce the existence of Jordan $^*$-homomorphisms $\Theta, \widetilde{\Theta}, \Theta_n, \widetilde{\Theta}_n: M(A) \to B^{**}$ such that $(\Theta,\Theta_n)$ and $(\widetilde{\Theta},\widetilde{\Theta}_n)$ are orthogonality preserving, $\Theta (1) = r(k) r(k)^*$, $\widetilde{\Theta} (1) = r(k)^* r(k)$, $\Theta_n (1) = r(h_n) r(h_n)^*$, $\widetilde{\Theta}_n (1) = r(h_n)^* r(h_n)$, $$\Psi (a) = \Theta (a) k = k \widetilde{\Theta} (a)$$ and $$\widetilde{\Phi}_n (a) = \Theta_n (a) r(h_n)^* r(h_n) = r(h_n)^* r(h_n) \widetilde{\Theta}_n (a),$$ for every $a\in A$, where $k = \Psi (1) = r(h_1)^* r(h_1) +\ldots + r(h_{m_0})^* r(h_{m_0})$. The invertibility of $k$, proved in the previous paragraph, shows that $\Theta (1) = 1$. Thus, since $(\widetilde{\Theta},\widetilde{\Theta}_n)$ is orthogonality preserving, the last statement in Lemma \ref{c OP pairs Jordan *-homomorphisms} proves that $$\widetilde{\Theta}_n (a)= \widetilde{\Theta}_n (1) \widetilde{\Theta} (a)=  \widetilde{\Theta} (a)  \widetilde{\Theta}_n (1),$$ for every $a\in A$, $n\in \mathbb{N}$. The above identities guarantee that $$\widetilde{\Phi}_n (a) = \Theta (a) r(h_n)^* r(h_n) = r(h_n)^* r(h_n) \widetilde{\Theta} (a),$$ for every $a\in A$, $n\in \mathbb{N}$.\smallskip

A similar argument to the one given above, but replacing $\widetilde{\Phi}_k$ with ${\Phi}_k$, shows the existence of a Jordan $^*$-homomorphism $\Theta : M(A) \to B^{**}$ such that  $${\Phi}_n (a) = \Theta (a) r(h_n) r(h_n)^* = r(h_n) r(h_n)^* {\Theta} (a),$$ for every $a\in A$, $n\in \mathbb{N}$, which concludes the proof.
\end{proof}

\bibliographystyle{amsplain}

\end{document}